\newtheorem{theorem}{Theorem}
\newtheorem{lemma}{Lemma}
\newtheorem{proposition}{Proposition}
\newtheorem{corollary}{Corollary}
\theoremstyle{definition}
\newtheorem{definition}{Definition}
\theoremstyle{remark}
\newtheorem*{acknowledgements}{Acknowledgements}
\def\F{\mathcal{F}}
\begin{document}


\title{Spectral Properties of Small Hadamard Matrices}
\author{Dorin Ervin Dutkay}
\author{John Haussermann}
\author{Eric Weber}

\address{[Dorin Ervin Dutkay] University of Central Florida\\
	Department of Mathematics\\
	4000 Central Florida Blvd.\\
	P.O. Box 161364\\
	Orlando, FL 32816-1364\\
U.S.A.} \email{Dorin.Dutkay@ucf.edu}

\address{[John Haussermann] University of Central Florida\\
	Department of Mathematics\\
	4000 Central Florida Blvd.\\
	P.O. Box 161364\\
	Orlando, FL 32816-1364\\
U.S.A.} \email{jhaussermann@knights.ucf.edu}
\address{[Eric Weber] Iowa State University\\
Department of Mathematics\\ 396 Carver Hall\\ Ames, IA 50011, U.S.A.}\email{esweber@iastate.edu}

\subjclass[2010]{05B20,65T50,11L05}
\keywords{Hadamard matrix, Discrete Fourier Transform, Gauss sums, eigenvalues}
\date{\today}
\begin{abstract}
We prove that if $A$ and $B$ are Hadamard matrices which are both of size $4 \times 4$ or $5 \times 5$ and in dephased form, then $tr(A) = tr(B)$ implies that $A$ and $B$ have the same eigenvalues, including multiplicity.  We calculate explicitly the spectrum for these matrices.  We also extend these results to larger Hadamard matrices which are permutations of the Fourier matrix and calculate their spectral multiplicities.
\end{abstract}
\maketitle



\section{Introduction}

The matrix of the discrete Fourier transform for the finite group $\mathbb{Z}_n$, where $n\geq2$ is an integer, is 
$$\F_n:=\frac{1}{\sqrt{n}}\left(e^{\frac{2\pi i jk}{n}}\right)_{j,k}.$$
The problem of finding the eigenvalues and their multiplicity for the discrete Fourier transform has a long history (see, e.g. \cite{AuTo79}) and has connections to many areas such as harmonic analysis, number theory, and numerical analysis. For example, one can see immediately that the trace of the matrix of the discrete Fourier transform is the Gauss sum 
$$\sum_{j=0}^{n-1}e^{\frac{2\pi ij^2}{n}}.$$
It took Gauss several years to give a complete formula for these sums. And it can be used to give a short proof for the famous quadratic reciprocity law, which can be expressed in the following formula:
$$\left(\begin{array}{c}p\\-\\q\end{array}\right)\left(\begin{array}{c}q\\-\\p\end{array}\right)=\frac{\mbox{Tr}(\F_{pq})}{\mbox{Tr}(\F_p)\mbox{Tr}(\F_q)}.$$

The eigenvalues and eigenvectors for the Fourier matrix were computed (without the use of Gauss sums!) by McClelland and Parks in the paper \cite{McPa72} which appeared in IEEE Transactions on Audio and Electroacoustics. A simpler computation of the multiplicities of the eigenvalues is possible with the use of Gauss sums (see \cite{AuTo79}).  Thus, the spectrum of the Fourier matrix unveils some deep results in number theory. In this paper we calculate the spectrum of several classes of Hadamard matrices.

\begin{definition}
An $n\times n$ matrix $H$ is called a {\it Hadamard matrix} if it is unitary, i.e., $HH^*=H^*H=I_n$, and all the entries have complex modulus $\frac{1}{\sqrt{n}}$.  In some contexts, such an $H$ is called a complex Hadamard matrix, but we will not make a distinction between real and complex $H$.  Also, in some contexts, a Hadamard matrix is orthogonal but not unitary.
\end{definition}

Hadamard matrices appear in a number of contexts. See \cite{TaZy06} for a history of their development and many examples of their uses, including computing  and quantum physics. Recent applications to quantum permutation groups are discussed in \cite{Ban12}. For example, a correspondence is shown between symmetries of a Hadamard matrix and quantum permutation groups.  We are motivated by the appearance of Hadamard matrices in the context of the Fuglede conjecture \cite{Fug74,DH12} and Fourier analysis on fractals \cite{JP98}.

Note that by virtue of $H$ being unitary, the spectrum is a subset of the unit circle.   Every Hadamard matrix $H$ can be factored as $H = D_{1} H_{0} D_{2}$ where $D_{1}$ and $D_{2}$ are diagonal matrices with diagonal entries of modulus $1$, and $H_{0}$ is again a Hadamard matrix, but in \emph{dephased form}, that is, all the entries in the first row and first column are $\frac{1}{\sqrt{n}}$.  Henceforth, we will assume that a Hadamard matrix is in dephased form.

For convenience, we will index the rows and columns of an $n \times n$ matrix by $\{0, 1, \dots, n-1\}$.  For a Hadamard matrix $H$ in dephased form, the $0$-th row and column have entries which are all $\frac{1}{\sqrt{n}}$; the remaining entries of the matrix, which form an $n-1 \times n-1$ principal submatrix, is called the \emph{core} of $H$.

The matrix of the usual discrete Fourier transform is a Hadamard matrix, and it is called {\it the standard Hadamard matrix} of dimension $n$.  Any permutation of the Fourier matrix gives also a Hadamard matrix (though not necessarily dephased). Two Hadamard matrices are called equivalent if they can be obtained from each other by permutations of row and columns and multiplication by diagonal matrices with diagonal entries of modulus 1. Hadamard matrices of dimensions up to and including 5 have been classified up to equivalence, but even for dimension 6, no such classification exists (see \cite{TaZy06}).

\begin{definition}
We say that two matrices $A$ and $B$ are permutationally equivalent if there are permutation matrices $P$ and $Q$ such that $B = Q^{T} A P$.  We say the  two matrices are spectrally equivalent if their spectra are equal including multiplicity, or equivalently $A = U^{*} B U$ for some unitary $U$.
\end{definition}
Note that in general when $A$ and $B$ are permutationally equivalent, they need not be spectrally equivalent.  Note also that if $P = Q$, then the diagonal entries of $B$ are the same as the diagonal entries of $A$, but their positions have potentially been permuted.

We will describe the spectrum of the Hadamard matrices of dimensions 4 and 5 and of symmetric permutations of the Fourier matrix. This will be done by showing the following metatheorem:  If $A$ and $B$ are two Hadamard matrices of a common class, then $tr(A)=tr(B)$ implies that $A$ and $B$ are spectrally equivalent.

We denote the entries of a matrix $M$ by $M[j,k]$.  For consistent notation of permutation matrices, we define for $\sigma$ a permutation of $\{0, 1, \dots, n-1\}$ the permutation matrix $P_{\sigma}$ by $P_{\sigma}[j,k] = 1$ if $j = \sigma(k)$ and $0$ otherwise.  Thus, for a matrix $M$,
\begin{align*}
(M P_{\sigma}) [j,k] &= \sum_{l = 0}^{n} M[j,l] P_{\sigma}[l,k] \\
&= M[j, \sigma(k) ].
\end{align*}
Likewise,
\begin{align*}
(P_{\sigma}^{T} M) [j,k] &= \sum_{l = 0}^{n} P_{\sigma}^{T}[j,l] M[l,k] \\
&= \sum_{l = 0}^{n} P_{\sigma}[l,j] M[l,k] \\
&= M[\sigma(j),k].
\end{align*}
Since $P_{\sigma}^{T} P_{\sigma} = I$, $P_{\sigma}^{T} = P_{\sigma^{-1}}$.

\section{$4 \times 4$ Hadamard matrices}

The basic $4 \times 4$ Hadamard matrix is:
\[ H(\rho) = \dfrac{1}{2}
\begin{pmatrix*}[r] 
1 & 1 & 1 & 1 \\ 1 & \rho & -1 & -\rho \\ 1 & -1 & 1 & -1 \\ 1 & -\rho & -1 & \rho \end{pmatrix*}
\]
where $\rho$ is any complex number of modulus $1$.  Any $4 \times 4$ Hadamard matrix in dephased form is permutationally equivalent to $H(\rho)$, for some $\rho$ of absolute value 1 (\cite{TaZy06}).  An easy computation yields that the eigenvalues of this matrix are $\{ 1, 1, -1, \rho\}$, with the corresponding eigenvectors:
\[  
\begin{pmatrix*}[r] 3 \\ 1 \\ 1 \\ 1 \end{pmatrix*}, 
\quad 
\begin{pmatrix*}[r] 0 \\ -1 \\ 2 \\ -1\end{pmatrix*}, 
\quad 
\begin{pmatrix*}[r] 1 \\ -1 \\ -1 \\ -1 \end{pmatrix*}, 
\quad 
\begin{pmatrix*}[r] 0 \\ 1 \\ 0 \\ -1 \end{pmatrix*}.  
\]
Note that the eigenvectors are independent of $\rho$, so the matrices $H(\rho)$ form a commutative set.

Our main theorem for $4 \times 4$ Hadamard matrices is the following.
\begin{theorem} \label{T:main4}
If $A$ and $B$ are both $4 \times 4$ Hadamard matrices in dephased form, and $tr(A) = tr(B)$, then $A$ and $B$ are spectrally equivalent.
\end{theorem}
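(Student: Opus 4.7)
The plan is to show that every $4 \times 4$ dephased Hadamard $A$ has two universal eigenvalues $1$ and $-1$, so its spectrum is $\{1,-1,\lambda_1,\lambda_2\}$ and is determined by $\lambda_1+\lambda_2 = tr(A)$ and $\lambda_1\lambda_2 = -\det(A)$. The theorem then reduces to proving that $\det(A)$ depends only on $tr(A)$. The universal eigenvalues are obtained from a direct computation: the first row and column of $A$ are the all-$1/2$ vector, and orthogonality to row/column $0$ forces the last three entries of every other row and column to sum to $-1/2$; these facts are exactly what is needed to verify $A(3,1,1,1)^T = (3,1,1,1)^T$ and $A(1,-1,-1,-1)^T = -(1,-1,-1,-1)^T$.

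To place $A$ in a tractable parametric family, I would invoke the classification from \cite{TaZy06} to write $A = P_\sigma^T H(\rho) P_\tau$ for some $\rho$ with $|\rho|=1$ and permutations $\sigma, \tau$. Because row $0$ (resp.\ column $0$) of $H(\rho)$ is the only row (resp.\ column) consisting entirely of $1/2$'s, comparing first rows and columns forces $\sigma(0) = \tau(0) = 0$. Conjugating by $P_\sigma$ yields $P_\sigma A P_\sigma^{-1} = H(\rho) P_{\tau\sigma^{-1}}$, so $A$ is spectrally equivalent to $H(\rho) P_\pi$ with $\pi := \tau \sigma^{-1}$ a permutation of $\{1,2,3\}$. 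It therefore suffices to analyze the six matrices $H(\rho) P_\pi$ for $\pi \in S_3$.

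Next, I would compute $tr(H(\rho)P_\pi)$ by reading off its diagonal and $\det(H(\rho)P_\pi) = (-\rho)\,\mathrm{sgn}(\pi)$. The six cases collapse into two families. When $\pi \in \{e,(13)\}$, one gets $tr \in \{1+\rho,\,1-\rho\}$ and $\det \in \{-\rho,\,\rho\}$, both yielding $\det(A) = 1 - tr(A)$. For the remaining four $\pi$, $tr \in \{(\rho-1)/2,\,-(1+\rho)/2\}$ and $\det \in \{\rho,\,-\rho\}$, all yielding $\det(A) = 2\,tr(A) + 1$.

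Finally, I would check that the two branches are compatible: the traces of the first family lie on the circle $|z-1| = 1$, and those of the second on $|z + 1/2| = 1/2$. These circles are externally tangent and meet only at $z = 0$, where both formulas give $\det(A) = 1$. So $\det(A)$ is a single-valued function of $tr(A)$, and the theorem follows. I expect the main obstacle to be exactly this compatibility check: a priori the two branches could have collided at a trace where their $\det$ formulas disagree, which would defeat the approach. The convenient geometry---external tangency precisely at the single trace value where the two formulas also happen to agree---is what ultimately makes the theorem true.
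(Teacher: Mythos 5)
Your proof is correct, and it takes a genuinely different route from the paper's. The paper argues by cases --- real entries, symmetric with a non-real entry, non-symmetric with a non-real entry --- using the observation that a non-real entry forces a non-real trace, enumerating the six possible real cores, and exhibiting explicit permutation conjugations or transpositions between matrices of equal trace; the symmetric and non-symmetric non-real cases are separated because their traces have the incompatible forms $1\pm\rho$ and $\frac{-1\pm\rho}{2}$. You instead observe that $(3,1,1,1)^{T}$ and $(1,-1,-1,-1)^{T}$ are eigenvectors of \emph{every} dephased $4\times 4$ Hadamard matrix with eigenvalues $1$ and $-1$ (this follows from the dephased condition together with orthogonality to row and column $0$, and these are two of the four eigenvectors the paper lists for $H(\rho)$), which reduces the theorem to showing that $\det(A)$ is a function of $tr(A)$, since the remaining two eigenvalues are the roots of $x^{2}-tr(A)\,x-\det(A)$. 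Your reduction to the six matrices $H(\rho)P_{\pi}$ is sound ($\sigma$ and $\tau$ must fix $0$ because row and column $0$ of $H(\rho)$ are its only all-$\tfrac12$ row and column), the computations check out ($\det = 1 - tr$ for $\pi\in\{e,(13)\}$ and $\det = 2\,tr+1$ for the other four), and the tangency of the circles $|z-1|=1$ and $|z+\tfrac12|=\tfrac12$ at $z=0$, where both formulas give $\det=1$, correctly rules out the only possible cross-family collision. What your approach buys: it isolates the structural reason the theorem holds (two universal eigenvalues plus a functional dependence of the determinant on the trace), it delivers the remaining eigenvalues as roots of an explicit quadratic, and it replaces the core-by-core enumeration with a uniform computation over $S_{3}$. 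What the paper's approach buys: it produces explicit permutational equivalences between the matrices themselves, a slightly stronger structural conclusion than equality of spectra, in a style that it then reuses for the $5\times 5$ case.
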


We proceed with the proof of this by considering several cases.  We first consider when $A$ and $B$ are both real, i.e. have all real entries.  We then consider the case when $A$ and $B$ are both symmetric; and finally when $A$ and $B$ are both non-symmetric.  Our first case is justified by the following lemma.

\begin{lemma} \label{L:trace4}
If $A$ is a $4 \times 4$ Hadamard matrix in dephased form, and if $A$ has a non-real entry, then the trace of $A$ is non-real.
\end{lemma}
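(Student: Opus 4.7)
The plan is to leverage the classification stated immediately before the lemma: every $4 \times 4$ Hadamard matrix in dephased form is permutationally equivalent to $H(\rho)$ for some unimodular $\rho$. Since row and column permutations merely rearrange entries, the multiset of entries of $A$ coincides with that of $H(\rho)$, so the hypothesis that $A$ has a non-real entry forces $\rho \notin \mathbb{R}$.

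I would then write $A = P_{\sigma}^{T} H(\rho) P_{\tau}$ for permutations $\sigma, \tau$ of $\{0, 1, 2, 3\}$ and use the dephased hypothesis to force $\sigma(0) = \tau(0) = 0$. Indeed, the zeroth row of $A$ is obtained by reordering the $\sigma(0)$-th row of $H(\rho)$; since it is identically $\frac{1}{2}$, that row of $H(\rho)$ must be the unique all-$\frac{1}{2}$ row, namely row $0$. A symmetric argument applied to the zeroth column handles $\tau(0)$. Consequently
$$ tr(A) = \frac{1}{2} + \sum_{j=1}^{3} H(\rho)[\sigma(j), \tau(j)], $$
and the three summands form a permutation pattern (one entry per row, one per column) in the core of $H(\rho)$.

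The heart of the argument is to verify that any such permutation pattern in the core has a sum whose coefficient of $\rho$ is non-zero. I would split cases according to which column is paired with the middle row. If the middle row is paired with the middle column (whose entries are all real), then rows $1$ and $3$ must occupy columns $1$ and $3$, each contributing an entry of the form $\pm \frac{\rho}{2}$. Otherwise the middle row is paired with column $1$ or $3$, forcing exactly one of rows $1, 3$ into the middle column (a real contribution) and the other into a $\pm \frac{\rho}{2}$ entry. In either case the three contributions sum to $\frac{a}{2} + \frac{b}{2}\rho$ with $a \in \mathbb{Z}$ and $b \in \{\pm 1, \pm 2\}$, so $tr(A)$ is non-real whenever $\rho$ is.

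The only non-trivial step is the combinatorial casework, which amounts to inspecting the six possible permutation patterns; there is no serious obstruction, just routine bookkeeping.
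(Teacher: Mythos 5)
Your proposal is correct and rests on the same key facts as the paper's proof: permutational equivalence to $H(\rho)$ forces $\rho\notin\mathbb{R}$, and the diagonal picks up a net contribution of $\pm\tfrac{\rho}{2}$ or $\pm\rho$ (never a cancelling $\rho,-\rho$ pair), so the trace is non-real. The paper organizes the same count by observing that the non-real entries of $A$ form a $2\times 2$ submatrix inside the $3\times 3$ core and hence meet the diagonal in one or two equal entries, whereas you enumerate transversals of the core of $H(\rho)$; this is only a cosmetic difference.
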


\begin{proof}
Since $A$ is permutationally equivalent to $H(\rho)$ for some $\rho$, if $A$ has any non-real entries, then they must be $\rho$ and $-\rho$, and both must appear in pairs.  Moreover, since any row, likewise column, which contains $\rho$, must also contain $-\rho$, the non-real entries of $A$ form a $2 \times 2$ submatrix.  It follows that there are one or two non-real entries on the diagonal of $A$.  If there is only one non-real entry on the diagonal of $A$, then the trace is non-real.  If there are two non-real entries on the diagonal of $A$, then they must be both $\rho$ or both $-\rho$, and thus the trace of $A$ is non-real.
\end{proof}

\begin{proposition} \label{P:real4}
If $A$ and $B$ are $4 \times 4$ Hadamard matrices in dephased form and have common trace which is real, then they are spectrally equivalent.
\end{proposition}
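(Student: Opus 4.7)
The plan is to reduce to real matrices using the contrapositive of Lemma~\ref{L:trace4}, enumerate all real dephased $4\times 4$ Hadamard matrices, and show that two of them with equal trace are related by a simultaneous row/column permutation, hence are unitarily similar.

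By Lemma~\ref{L:trace4}, if $tr(A)$ is real then every entry of $A$ is real, and likewise for $B$. A real dephased $4 \times 4$ Hadamard matrix has entries in $\{\pm \tfrac{1}{2}\}$, and orthogonality of distinct rows (which must agree in exactly two positions and disagree in two) forces each row of the $3 \times 3$ core to contain exactly one $+\tfrac{1}{2}$ and two $-\tfrac{1}{2}$'s, and likewise for columns. Consequently, the positions of the $+\tfrac{1}{2}$ entries in the core form the graph of a permutation $\pi \in S_3$. This gives a bijection between real dephased $4\times 4$ Hadamard matrices and $S_3$; denote the matrix associated to $\pi$ by $A_\pi$.

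A direct computation along the diagonal yields $tr(A_\pi) = k - 1$, where $k$ is the number of fixed points of $\pi$, so $tr(A_\pi) \in \{2, 0, -1\}$ according as $\pi$ is the identity, a transposition, or a $3$-cycle. Since in $S_3$ the cycle type (equivalently, the number of fixed points) determines the conjugacy class, $tr(A_\pi) = tr(A_{\pi'})$ forces $\pi' = \sigma^{-1}\pi\sigma$ for some $\sigma \in S_3$.

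To finish, I would extend $\sigma$ to a permutation of $\{0,1,2,3\}$ by $\sigma(0) = 0$ and let $P_\sigma$ be the associated permutation matrix. From the identity $(P_\sigma^T A_\pi P_\sigma)[i,j] = A_\pi[\sigma(i),\sigma(j)]$ derived in the introduction, the first row and column of $P_\sigma^T A_\pi P_\sigma$ are preserved since $\sigma$ fixes $0$, and a core entry at $(i,j)$ of the transformed matrix is $+\tfrac{1}{2}$ precisely when $\sigma(j) = \pi(\sigma(i))$, i.e., when $j = \pi'(i)$. Hence $B = A_{\pi'} = P_\sigma^T A_\pi P_\sigma$ is unitarily similar to $A = A_\pi$, which proves spectral equivalence. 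There is no genuinely hard step; the main conceptual point is recognizing that conjugation in $S_3$ is realized on these matrices by simultaneous row/column permutations, which upgrades permutational equivalence within a fixed trace class to genuine unitary similarity.
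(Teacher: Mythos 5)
Your proof is correct, and it follows the same overall reduction as the paper (invoke Lemma~\ref{L:trace4} to force $A$ and $B$ real, observe that each core row and column has exactly one positive entry, and then show cores with equal trace are conjugate by a permutation matrix fixing index $0$). Where you differ is in how the equivalences are organized: the paper lists the six cores explicitly, notes $C_3^T=C_2$ to handle the trace $-1$ class by transposition, and exhibits specific permutations $(123)$ and $(13)$ for the trace $0$ class; you instead identify the cores with elements of $S_3$ via the positions of the $+\tfrac12$ entries, observe that the trace is $k-1$ with $k$ the number of fixed points, and use the fact that cycle type determines conjugacy class in $S_3$ to get a single uniform argument. This buys you two things: you never need the transpose operation (both $3$-cycles are already conjugate in $S_3$), and the case analysis collapses into one conjugation computation, $P_\sigma^T A_\pi P_\sigma = A_{\sigma^{-1}\pi\sigma}$, which you verify correctly from the index formula in the introduction. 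One small point in your favor: your trace values $\{2,0,-1\}$ are right, whereas the paper's stated value of $-\tfrac12$ for the $C_2,C_3$ class is a slip (the diagonal there is $\tfrac12-\tfrac12-\tfrac12-\tfrac12=-1$); neither proof is affected since all that matters is that the three classes have pairwise distinct traces. Your phrase ``bijection between real dephased $4\times4$ Hadamard matrices and $S_3$'' claims slightly more than you verify (surjectivity requires checking each $A_\pi$ is actually unitary), but the proof only uses the injection of Hadamard cores into $S_3$, so nothing is lost.
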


\begin{proof}
By Lemma \ref{L:trace4}, both $A$ and $B$ are real.  The core of $A$ and $B$ must have a $1$ in each row and column, and two $-1$'s in each row and column.  Thus, the core of $A$ and $B$ must be one of the following six possibilities (without the scaling factor of $\frac{1}{2}$):
\begin{align*}
C_{1} &= \begin{pmatrix*}[r] 1 & -1 & -1 \\ -1 & 1 & -1 \\ -1 & -1 & 1 \end{pmatrix*}
\quad
C_{2} = \begin{pmatrix*}[r] -1 & 1 & -1 \\ -1 & -1 & 1 \\ 1 & -1 & -1 \end{pmatrix*}
\quad
C_{3} = \begin{pmatrix*}[r]  -1 & -1 & 1 \\ 1 & -1 & -1 \\ -1 & 1 & -1 \end{pmatrix*} \\
C_{4} &= \begin{pmatrix*}[r] -1 & 1 & -1 \\ 1 & -1 & -1 \\ -1 & -1 & 1 \end{pmatrix*}
\quad
C_{5} = \begin{pmatrix*}[r] -1 & -1 & 1 \\ -1 & 1 & -1 \\ 1 & -1 & -1 \end{pmatrix*}
\quad
C_{6} = \begin{pmatrix*}[r] 1 & -1 & -1 \\ -1 & -1 & 1 \\ -1 & 1 & -1 \end{pmatrix*}.
\end{align*}
Note that if the core of $A$ is $C_{1}$, then $tr(A) = 2$; if the core of $A$ is $C_{2}$ or $C_{3}$, then $tr(A) = \frac{-1}{2}$, and if the core of $A$ is any of the remaining three, then $tr(A) = 0$.  Thus, we need to show that $C_{2}$ and $C_{3}$ are spectrally equivalent cores, and likewise $C_{4}$, $C_{5}$, and $C_{6}$.

Note that $C_{3}^{T} = C_{2}$, so they are spectrally equivalent, i.e. if $C_{3}$ is the core of $A$ and $C_{2}$ is the core of $B$, then $A$ and $B$ are spectrally equivalent.

Now, suppose the core of $A$ is $C_{4}$ and the core of $B$ is $C_{5}$.  Let $\sigma = (123)$; then $P_{\sigma}^{T} A P_{\sigma} = B$.  Likewise, if the core of $B$ is $C_{6}$, and if $\tau = (13)$, then $P_{\tau}^{T} A P_{\tau} = B$.  Thus, $C_{4}$, $C_{5}$, and $C_{6}$ are spectrally equivalent cores.
\end{proof}

For the remainder of this section, we will assume that the Hadamard matrices have non-real entries; the non-real entries are $\rho$ and $-\rho$.  By the proof of Lemma \ref{L:trace4}, it is impossible for both $\rho$ and $-\rho$ to be on the diagonal; likewise, it is impossible for the non-real entries all to be off-diagonal.

\begin{proposition} \label{P:sym4}
Let $H$ be a symmetric $4 \times 4$ Hadamard matrix in dephased form with $\rho$ on the diagonal.  Then the eigenvalues of $H$ are $\{1, 1, -1, \rho\}$.
\end{proposition}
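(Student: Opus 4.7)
My plan is to show that $H$ is similar to the basic matrix $H(\rho)$ via conjugation by a permutation matrix $P_{\pi}$ with $\pi(0) = 0$; since $P_{\pi}$ is unitary, this yields the stated spectrum $\{1, 1, -1, \rho\}$, already computed for $H(\rho)$ in the opening of this section.

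To carry this out I would first pin down the possible rows of the $3 \times 3$ core $C$ of $H$. Orthogonality of each nonzero row of $H$ with the constant row $0$ forces $\sum_{l} C[i, l] = -1$; writing this sum as $(a - b) + (c - d)\rho$ with $a, b, c, d \geq 0$ and $a + b + c + d = 3$, the fact that $\rho$ is non-real forces $c = d$. Hence each row of $C$ is either of real type $(1, -1, -1)$ in some order or of non-real type $(-1, \rho, -\rho)$ in some order, and the same holds for columns. By the argument in the proof of Lemma \ref{L:trace4} applied in the symmetric setting, the two non-real diagonal entries of $H$ are both equal to $\rho$; let $i, j \in \{1, 2, 3\}$ be their positions in the core and let $k$ be the third. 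Then rows and columns $i$ and $j$ are of non-real type with $\rho$ on the diagonal, so the two off-diagonal slots of each contain $-\rho$ and $-1$ in some order, while $C[k, k] \in \{\pm 1\}$.

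I would then perform a short case analysis on $C[i, j]$, which by symmetry equals $C[j, i]$. If $C[i, j] = -1$, the row-$i$ constraint yields $C[i, k] = -\rho$; symmetry and the same argument applied to row $j$ give $C[k, i] = C[k, j] = -\rho$, making the row-sum on row $k$ read $C[k, k] - 2\rho = -1$. This has no solution with $C[k, k] \in \{\pm 1\}$ for $\rho$ non-real on the unit circle. Hence $C[i, j] = -\rho$, and the remaining entries are forced: $C[i, k] = C[j, k] = -1$ by row sums on rows $i$ and $j$, $C[k, i] = C[k, j] = -1$ by symmetry, and $C[k, k] = 1$ by the row sum on row $k$. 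The resulting core is exactly $P_{\tau}^{T} C_{0} P_{\tau}$, where $C_{0}$ is the core of $H(\rho)$ and $\tau$ is the transposition of $\{1, 2, 3\}$ swapping $k$ with $2$ (the identity when $k = 2$). Extending $\tau$ to fix $0$ produces the permutation $\pi$ with $H = P_{\pi}^{T} H(\rho) P_{\pi}$.

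The main obstacle is ensuring the case analysis exhausts all configurations while exploiting the symmetry of $C$ to reduce the work to a single binary choice of $C[i, j]$. Once that reduction is in place, the bad branch closes by a one-line equation whose only solutions for $\rho$ are real, and the good branch produces a core manifestly in the orbit of $C_{0}$ under simultaneous row-column permutation, yielding the desired spectral equivalence.
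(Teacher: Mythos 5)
Your proof is correct and takes essentially the same route as the paper: both arguments show that symmetry plus the Hadamard/dephased constraints force the core into the orbit of the core of $H(\rho)$ under simultaneous row--column permutation fixing index $0$, so that $H = P_{\pi}^{T} H(\rho) P_{\pi}$ and the spectrum is $\{1,1,-1,\rho\}$. The paper reaches this more quickly by observing that the unique $+1$ of the core must lie on the diagonal with all $-1$'s in its row and column, whereas you re-derive the admissible row types from orthogonality with row $0$ and close the one bad branch via a row-sum contradiction; the substance is the same.
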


\begin{proof}
Since $H$ is symmetric, the $1$ that appears in the core is on the diagonal. Since all the $-1$'s in the matrix are on the same row or column as this $1$, this completely determines the matrix, except for switching $\rho$ with $-\rho$.   Suppose that the $1$ is in the $(1,1)$ position; if $\sigma = (12)$, then the matrix $P_{\sigma}^{T} H P_{\sigma}$ has the $1$ in the $(2,2)$ position.  Moreover, the other two entries on the diagonal are $\rho$; thus $P_{\sigma}^{T} H P_{\sigma} = H(\rho)$.  It follows that $H$ is spectrally equivalent to $H(\rho)$, and as noted in the introduction, the eigenvalues of $H(\rho)$ are $\{1, 1, -1, \rho\}$.

An analogous argument applies if $H$ has the $1$ in the $(3,3)$ position.
\end{proof}

\begin{lemma} \label{L:nonsym4}
Suppose $A$ and $B$ are non-symmetric $4 \times 4$ Hadamard matrices. Then $B$ can be obtained from $A$ by the following operations:  conjugation by a permutation matrix, transposition, or interchanging $\rho$ and $-\rho$.
\end{lemma}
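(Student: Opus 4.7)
My plan is to reduce both $A$ and $B$ to a common normal form using the allowed operations. I would first pin down the combinatorial structure of the core of a non-symmetric dephased Hadamard matrix, then align the key entries via a permutation conjugation, and finally resolve a residual two-fold ambiguity by interchanging $\rho$ with $-\rho$.

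For the structural step, each row and column of the core must sum to $-1$ (by orthogonality against the all-$\tfrac12$ first row and first column), and by Lemma~\ref{L:trace4} the non-real entries $\pm\rho$ form a $2\times 2$ submatrix with one $\rho$ and one $-\rho$ in each of its rows and columns. These constraints together force a rigid skeleton: the core contains a unique entry $1$ at some position $(r,c)\in\{1,2,3\}^2$, entries $-1$ in the remainder of row $r$ and column $c$, and a $2\times 2$ checkerboard of $\pm\rho$ in the complementary rows and columns. The argument from Proposition~\ref{P:sym4} shows that symmetry is equivalent to $r=c$, so the non-symmetric hypothesis gives $r_A\neq c_A$ and $r_B\neq c_B$.

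Next, I pick $\sigma_A\in S_3$ with $\sigma_A(1)=r_A$ and $\sigma_A(2)=c_A$ (and $\sigma_A(0)=0$ to preserve the dephased form). Using the formulas from the introduction, $P_{\sigma_A}^T A P_{\sigma_A}$ has its core-$1$ at the canonical position $(1,2)$. I do the same for $B$ with the corresponding $\sigma_B$. Since every $\pm 1$ position in the core is determined by the position of the single $1$, the two conjugated matrices now agree except possibly on the $2\times 2$ submatrix occupying rows $\{2,3\}$ and columns $\{1,3\}$. The row/column constraint on this submatrix leaves only two configurations,
\[
\begin{pmatrix}\rho&-\rho\\-\rho&\rho\end{pmatrix}\qquad\text{and}\qquad\begin{pmatrix}-\rho&\rho\\\rho&-\rho\end{pmatrix},
\]
which are related exactly by interchanging $\rho$ with $-\rho$. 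Thus the conjugated $A$ either equals the conjugated $B$ or becomes equal after the swap, and reversing the conjugations gives $B$ from $A$ via the listed operations.

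The step that requires the most care is the structural one: one must systematically rule out all other candidate cores compatible with the Hadamard, dephased, and non-symmetric conditions, since the rest of the argument rests on this rigidity. Once the skeleton is pinned down, the normalization by $S_3$-conjugation and the two-case analysis of the remaining submatrix are forced. Transposition, though allowed in the statement, is not needed here, because transposing the skeleton simply swaps $r$ with $c$, and this swap is already available through the $S_3$-conjugation.
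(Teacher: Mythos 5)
Your proposal is correct and takes essentially the same route as the paper: both arguments exploit the rigidity of the core (a unique $1$ whose position determines all the $\pm1$'s, plus a $2\times 2$ checkerboard of $\pm\rho$) and reduce $A$ and $B$ to a common normal form using the allowed operations, the only difference being that you conjugate first and absorb the residual ambiguity into the $\rho\leftrightarrow-\rho$ swap, whereas the paper applies transposition and the swap first and then checks that the three resulting cores are mutually conjugate. Your observation that transposition is actually redundant (since moving the $1$ from $(c,r)$ to $(r,c)$ is already achieved by conjugating with the transposition of indices) is correct and slightly sharpens the statement.
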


\begin{proof}
Consider the core of the matrices $A$ and $B$; since they are non-symmetric, the cores have the $1$ off the diagonal.  We may, if necessary, transpose $A$ and/or $B$ to place the $1$ above the diagonal for both.  Also, if necessary, we may interchange $\rho$ and $-\rho$ in $A$ and/or $B$ so that the $-\rho$'s are off the diagonal.  Thus, the core of $A$ and $B$ must be one of the following three possibilities:
\[ D_{1} = \begin{pmatrix*}[r] -1 & 1 & -1 \\ \rho & -1 & -\rho \\ -\rho & -1 & \rho \end{pmatrix*}
\quad
D_{2} = \begin{pmatrix*}[r] -1 & -1 & 1 \\ -\rho & \rho & -1 \\ \rho & -\rho & -1 \end{pmatrix*}
\quad
D_{3} = \begin{pmatrix*}[r] \rho & -\rho & -1 \\ -1 & -1 & 1 \\ -\rho & \rho & -1 \end{pmatrix*}.
\]
If $\sigma = (23)$, then $P_{\sigma}^{T} D_{1} P_{\sigma} = D_{2}$; if $\sigma = (12)$, then $P_{\sigma}^{T} D_{2} P_{\sigma} = D_{3}$.  Thus, the core of $A$ can be interchanged to the core of $B$ by the three operations as stated.
\end{proof}

We are now ready to prove Theorem \ref{T:main4}.
\begin{proof}
If the common trace of $A$ and $B$ is real, then by Proposition \ref{P:real4}, $A$ and $B$ are spectrally equivalent.  If the trace is non-real, then we must demonstrate that either $A$ and $B$ are both symmetric, or both non-symmetric.   By the proof of Lemma \ref{L:trace4}, if $A$ is symmetric, then the trace must be $1 \pm \rho$.  Likewise, if $B$ is non-symmetric, then by the proof of Lemma \ref{L:nonsym4}, the trace must be $\frac{-1 \pm \rho}{2}$.  For $\rho$ non-real, these traces cannot be equal.

If they are both symmetric, with common trace, then they have the same non-real diagonal entries, and so Proposition \ref{P:sym4} implies that they are spectrally equivalent.  Likewise, if they are both non-symmetric, then by Lemma \ref{L:nonsym4}, $B$ can be obtained from $A$ by the three operations, but since they have common trace, we do not need to interchange $\rho$ and $-\rho$.   Hence, they are spectrally equivalent.
\end{proof}

We have already computed explicitly the eigenvalues of a symmetric, non-real $4 \times 4$ Hadamard matrix.  We consider the other possibilities now.

\begin{corollary}
If $H$ is a real $4 \times 4$ Hadamard matrix in dephased form, then the possible sets of eigenvalues are: $\{ 1 , 1, 1, -1\}$, $\{1,1, -1, -1\}$ or $\{1, -1, e^{2 \pi i/3}, e^{4 \pi i /3} \}$.
\end{corollary}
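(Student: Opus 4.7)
The strategy is to combine Theorem \ref{T:main4} with the classification of cores from Proposition \ref{P:real4}: the six possible cores $C_1,\dots,C_6$ produce exactly three distinct trace values, so there are at most three spectral equivalence classes of real dephased $4\times 4$ Hadamard matrices, and it suffices to compute one representative per class.

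For the class with trace $2$ (core $C_1$), the standard Fourier matrix $H(1)$ is a representative; the formula $\{1,1,-1,\rho\}$ from the introduction with $\rho=1$ yields eigenvalues $\{1,1,1,-1\}$. For the class with trace $0$ (cores $C_4,C_5,C_6$), one checks directly that $H(-1)$ has core $C_5$, so the same formula with $\rho=-1$ gives eigenvalues $\{1,1,-1,-1\}$.

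For the remaining class (cores $C_2,C_3$), I would take $A$ to be the Hadamard matrix with core $C_2$. A direct evaluation of the characteristic polynomial of $A$ shows it factors as $(\lambda-1)(\lambda+1)(\lambda^2+\lambda+1)$, producing the eigenvalue set $\{1,-1,e^{2\pi i/3},e^{4\pi i/3}\}$. A more conceptual alternative, which I expect to be cleaner, is to observe that $A^2$ equals the permutation matrix $P_\sigma$ with $\sigma=(123)$; its spectrum is $\{1,1,e^{2\pi i/3},e^{4\pi i/3}\}$, so the eigenvalues of $A$ must be sixth roots of unity whose squares match that multiset. Combining this with the trace value and the fact that a real matrix has complex eigenvalues in conjugate pairs narrows the possibilities to two candidate spectra, which are then separated by a single additional invariant, for example $\det(A)=-1$ or $tr(A^3)=2$.

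The only nontrivial computation is the last case; the main obstacle is disambiguating the short list of candidate spectra left after the $A^2$ and trace constraints, and this is resolved by any one further invariant. Everything else is bookkeeping over the classification already in hand.
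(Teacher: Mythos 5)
Your proposal is correct and follows essentially the same route as the paper: reduce to the three spectral classes of cores via Proposition \ref{P:real4} and Theorem \ref{T:main4}, then compute the eigenvalues of one representative from each class (the paper simply states ``computations yield'' these three spectra). Your additional observations --- identifying the first two classes with $H(1)$ and $H(-1)$ so the formula $\{1,1,-1,\rho\}$ applies, and resolving the $C_2$ class via $A^2 = P_{\sigma}$ together with the trace and $\det(A)=-1$ --- are valid and merely make explicit the computation the paper leaves implicit.
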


\begin{proof}
By Proposition \ref{P:real4}, we need to compute the eigenvalues of Hadamard matrices which have cores $C_{1}$, $C_{2}$, or $C_{4}$.  Computations yield that the core $C_{1}$ has eigenvalues of $\{ 1 , 1, 1, -1\}$, the core $C_{2}$ (and $C_{3}$) has eigenvalues of $\{1, -1, e^{2 \pi i/3}, e^{4 \pi i /3} \}$, and the core $C_{4}$ (and $C_{5}$, $C_{6}$) has eigenvalues $\{1,1, -1, -1\}$.
\end{proof}

\begin{corollary}
If $H$ is a non-symmetric $4 \times 4$ Hadamard matrix in dephased form with non-real entry $\rho$,  then the trace of $H$ is either $\frac{-1+\rho}{2}$ or $\frac{-1-\rho}{2}$ and the eigenvalues of $H$ are, respectively,
\[ \{ 1, -1, \dfrac{ -(1 + \rho) \pm \sqrt{ 1 - 14 \rho + \rho^2} }{4} \},
\qquad
\text{ or }
\qquad
\{ 1, -1, \dfrac{ -(1 - \rho) \pm \sqrt{ 1 + 14 \rho + \rho^2} }{4} \}. \]
\end{corollary}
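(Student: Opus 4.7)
By Lemma~\ref{L:nonsym4}, every non-symmetric dephased $4\times 4$ Hadamard matrix with non-real entry $\rho$ is obtained from a single canonical representative---say the matrix $A$ whose core is $D_{1}$---by one of three moves: conjugation by a permutation matrix, transposition, or the exchange $\rho\leftrightarrow-\rho$. The first two operations preserve the spectrum, while the third interchanges the two trace cases in the statement. So it suffices to analyze $A$ alone and then recover the other trace case by the substitution $\rho\mapsto-\rho$.

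The central observation is that $1$ and $-1$ are eigenvalues of $A$, with $\rho$-independent eigenvectors $(3,1,1,1)^{T}$ and $(1,-1,-1,-1)^{T}$---the very same eigenvectors that already appear for the symmetric matrix $H(\rho)$ at the beginning of this section. This is not a coincidence: both vectors are invariant under every permutation of the last three coordinates, which is precisely the symmetry underlying Lemma~\ref{L:nonsym4}. A single direct matrix-vector multiplication verifies these eigenrelations. Once $\pm 1$ are secured, Vieta's formulas reduce the remaining two eigenvalues $\lambda_{3},\lambda_{4}$ to a pair of easily computed invariants:
\[\lambda_{3}+\lambda_{4}=\mathrm{tr}(A),\qquad \lambda_{3}\lambda_{4}=-\det(A).\]

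The trace is read straight off the diagonal of $A$. For the determinant, the cleanest route is the row reduction $R_{i}\mapsto R_{i}-R_{0}$ for $i=1,2,3$, which leaves a block-triangular form whose nontrivial $3\times 3$ block evaluates to $\pm 16\rho$ via one cofactor expansion, so $\det(A)=\pm\rho$ after dividing by the overall scalar $2^{4}$. The two remaining eigenvalues are then roots of $t^{2}-\mathrm{tr}(A)\,t+\lambda_{3}\lambda_{4}=0$, and the quadratic formula yields the closed-form expressions in the statement; the discriminant simplifies to $\tfrac{1\pm 14\rho+\rho^{2}}{4}$, which is what accounts for the curious coefficient $14$.

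The principal obstacle is purely clerical: one must keep the sign of $\rho$ straight throughout and confirm that the substitution $\rho\leftrightarrow-\rho$ correctly pairs each of the two traces with its corresponding closed-form eigenvalue pair. Everything else is mechanical once the $\rho$-independent eigenvectors $(3,1,1,1)^{T}$ and $(1,-1,-1,-1)^{T}$ have been produced.
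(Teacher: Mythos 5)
Your outline matches the paper's in its first move---both reduce to a single representative with core $D_{1}$ (the paper's $H_{1}(\rho)$) and dispatch the other trace case by $\rho\mapsto-\rho$; the paper gets there via Theorem~\ref{T:main4} while you invoke Lemma~\ref{L:nonsym4} directly, which is the same mechanism one level down. Where you genuinely improve on the paper is the computation itself: the paper's proof ends with the sentence ``a computation produces those eigenvalues,'' whereas you replace that black box with a structured argument. Your key claims all check out: $(3,1,1,1)^{T}$ and $(1,-1,-1,-1)^{T}$ are indeed eigenvectors of $H_{1}(\rho)$ for $1$ and $-1$ (the row sums telescope so that $\rho$ cancels), $\mathrm{tr}(H_{1}(\rho))=\frac{-1+\rho}{2}$, and your row reduction gives $\det(H_{1}(\rho))=\rho$ exactly (not ``$\pm\rho$''---pin this down, since the sign of the determinant is load-bearing). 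With $1\cdot(-1)=-1$ absorbing the product and $1+(-1)=0$ absorbing the sum, Vieta correctly reduces the problem to $t^{2}-\frac{\rho-1}{2}\,t-\rho=0$.

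The gap is that you declare the remaining sign bookkeeping ``purely clerical'' and stop---but that bookkeeping is the only place the statement can fail, and in fact it does. Solving your quadratic gives $\lambda_{3,4}=\frac{-(1-\rho)\pm\sqrt{1+14\rho+\rho^{2}}}{4}$ for the representative with trace $\frac{-1+\rho}{2}$: that is the \emph{second} displayed set in the corollary, not the first. You can see the mismatch without any computation: the first displayed set sums to $1-1+\frac{-2(1+\rho)}{4}=\frac{-1-\rho}{2}$, so it cannot be the spectrum of a matrix of trace $\frac{-1+\rho}{2}$ when $\rho$ is non-real. (Consistently, $\det$ of the first set is $-\rho$ while $\det H_{1}(\rho)=\rho$.) So the corollary's ``respectively'' pairs the traces with the wrong eigenvalue sets; the two sets should be interchanged, and the paper's follow-up remark that the sign not under the radical ``is the same as the sign of the element on the diagonal'' inherits the same reversal. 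Your method is sound and, carried to completion, actually detects this; as written, the proposal stops one line short of the only step that matters.
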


\begin{proof}
If $H$ is not symmetric and has trace $\frac{-1 + \rho}{2}$, then by Theorem \ref{T:main4} it has the same eigenvalues as 
\[ H_{1}(\rho) = \dfrac{1}{2}
\begin{pmatrix*}[r]
1 & 1 & 1 & 1  \\ 1 & -1 & 1 & -1 \\ 1 & \rho & -1 & -\rho \\ 1 & -\rho & -1 & \rho 
\end{pmatrix*}.
\]
Likewise, if $tr(H) = \frac{-1 - \rho}{2}$, then it has the same eigenvalues as $H_{1}(-\rho)$.
A computation produces those eigenvalues.
\end{proof}

To quickly determine the eigenvalues of a $4 \times 4$ dephased Hadamard matrix from among the above possibilities, we can use the fact that the trace of a matrix is the sum of its eigenvalues. In the case that such a matrix is symmetric, we obtain that the sign of the last eigenvalue matches the sign of the entries on the diagonal that are not equal to $1$. In the case of an asymmetric matrix, there are two $-1$'s and a $1$ on the diagonal, and $\rho$ or $-\rho$. For the sake of notation, say this (other) symbol on the diagonal is $x$. Then
\[
\frac{-1+x}{2} = 1 + -1 + \dfrac{ -(1 \pm \rho) + \sqrt{ 1 \mp 14 \rho + \rho^2} }{4}  + \dfrac{ -(1 \pm \rho) - \sqrt{ 1 \mp 14 \rho + \rho^2} }{4} .
\]
Simplifying, we obtain
\[
\frac{-1+x}{2} = \frac{-1\pm \rho}{2} .
\]
Therefore, the sign which is not under the square root of the $\pm$ in the expression for the eigenvalues of the asymmetric matrix is the same as the sign of the element on the diagonal which is not $1$ or $-1$. In the case that $x = \pm 1$, this still holds: if $x = -1$, then all the elements on the diagonal except the first are negative, so there is no ambiguity, and if $x = 1$ then the matrix is symmetric.

We now investigate the expression providing the complicated eigenvalues in the asymmetric case.

\begin{lemma}
The functions $k_{\pm}(\rho) = \dfrac{ -(1 + \rho) \pm \sqrt{ 1 - 14 \rho + \rho^2} }{4}$ map the unit circle to the unit circle, and are surjective. 
\end{lemma}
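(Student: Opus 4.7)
The key observation is that $k_+(\rho)$ and $k_-(\rho)$ are the two roots of a quadratic in $z$ whose coefficients depend nicely on $\rho$. From the formula, Vieta's relations give $k_+(\rho)\,k_-(\rho) = \rho$ (after simplifying $[(1+\rho)^2 - (1 - 14\rho + \rho^2)]/16$) and $k_+(\rho) + k_-(\rho) = -(1+\rho)/2$, so $k_\pm$ satisfy
\[
2z^2 + (1+\rho)z + 2\rho = 0.
\]
This single identity drives both halves of the claim.

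For the first half, I plan to compute $|k_\pm(\rho)|^2$ directly after setting $\rho = e^{i\theta}$. The main trick is to factor $1 - 14e^{i\theta} + e^{2i\theta} = -(14 - 2\cos\theta)\, e^{i\theta}$, which puts the discriminant's argument in nice form: its square root equals $i\, e^{i\theta/2}\sqrt{14 - 2\cos\theta}$ up to a sign. Together with $-(1 + e^{i\theta}) = -2\cos(\theta/2)\, e^{i\theta/2}$, this lets one pull out a common factor $e^{i\theta/2}/4$ from $k_\pm$. The squared modulus then reduces to $\tfrac{1}{16}\bigl(4\cos^2(\theta/2) + 14 - 2\cos\theta\bigr)$, which simplifies to $1$ by the double-angle identity $4\cos^2(\theta/2) = 2 + 2\cos\theta$.

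For surjectivity, I plan to invert the map. Since $2z^2 + (1+\rho)z + 2\rho$ is linear in $\rho$ once $z$ is fixed, given any $w$ on the unit circle one can solve
\[
\rho = -\frac{w(2w+1)}{w+2}.
\]
The denominator never vanishes on the unit circle, and a short computation gives $|2w+1|^2 = |w+2|^2 = 5 + 4\operatorname{Re}(w)$ whenever $|w| = 1$, so $|\rho| = 1$ automatically. By construction, $w$ is a root of the quadratic associated with this $\rho$, so $w \in \{k_+(\rho), k_-(\rho)\}$, which is the desired surjectivity.

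I do not expect a genuine obstacle: the whole argument is just an algebraic manipulation of the two Vieta relations. The only mild subtlety is interpreting the ``$\pm$'' in the statement; since the claim concerns moduli and the combined image of the two branches, branch-choice issues never enter. As a sanity check, one could alternatively derive $|k_\pm| = 1$ from the fact that $k_\pm(\rho)$ appear among the eigenvalues of the unitary matrix $H_1(\rho)$ for non-real $\rho$ on the unit circle and extend by continuity, but the direct $\rho = e^{i\theta}$ computation is cleaner and avoids excluding $\rho = \pm 1$.
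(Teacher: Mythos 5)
Your proposal is correct. The surjectivity half is essentially the paper's own argument: both of you solve the quadratic relation for $\rho$ to get $\rho = -\dfrac{w(2w+1)}{w+2}$ and then check that $|\rho|=1$ whenever $|w|=1$; the paper phrases this as ``$-k$ times a M\"obius transform which fixes the unit circle,'' while you verify the same fact by the explicit computation $|2w+1|^2 = |w+2|^2 = 5+4\operatorname{Re}(w)$ --- these are the same step in different clothing, and both proofs (correctly) establish only that $w$ lies in $\{k_+(\rho),k_-(\rho)\}$, which is the natural reading of surjectivity for a two-valued branch. Where you genuinely diverge is the first half: the paper disposes of it in one sentence by observing that $k_\pm(\rho)$ occur among the eigenvalues of the unitary matrix $H_1(\rho)$, hence lie on the unit circle, whereas you compute $|k_\pm(e^{i\theta})|^2$ directly via the factorizations $1-14e^{i\theta}+e^{2i\theta}=-(14-2\cos\theta)e^{i\theta}$ and $-(1+e^{i\theta})=-2\cos(\theta/2)e^{i\theta/2}$, reducing to $\tfrac{1}{16}\bigl(4\cos^2(\theta/2)+14-2\cos\theta\bigr)=1$. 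Your computation checks out (the real part $-2\cos(\theta/2)$ and the purely imaginary part $\pm i\sqrt{14-2\cos\theta}$ contribute independently to the squared modulus), and it has the virtue of being self-contained, not relying on the eigenvalue identification in the preceding corollary. One small remark: your stated reason for preferring the direct computation --- that the unitarity argument would require excluding $\rho=\pm1$ and extending by continuity --- is not actually needed, since $H_1(\rho)$ is unitary for every $\rho$ of modulus $1$ and the characteristic-polynomial identity giving $k_\pm(\rho)$ as eigenvalues is polynomial in $\rho$, hence valid there too; the paper's one-liner is sound as written.
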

\begin{proof}
Hadamard matrices have eigenvalues on the unit circle, so if $\rho$ is on the unit circle, so is $k(\rho)$.

We solve the expression 
\[
k = \dfrac{ -(1 + \rho) \pm \sqrt{ 1 - 14 \rho + \rho^2} }{4}
\]
for $\rho$ and obtain
\[
\rho = -k \left( \dfrac{k+\frac12}{1+\frac12 k} \right) .
\]
The right hand side is $-k$ times a Mobius transform which fixes the unit circle. Therefore, for any $k$ on the unit circle, there is a corresponding $\rho$ on the unit circle. So $k$ is onto.
\end{proof}

\section{$5 \times 5$ Hadamard matrices}

Any $5 \times 5$ Hadamard matrix in dephased form is permutationally equivalent to the Fourier matrix on $\mathbb{Z}_{5}$:
\[
\mathcal{F}_{5} = 
\frac{1}{\sqrt{5}}
\begin{pmatrix*}
1 & 1 & 1 & 1 & 1 \\
1 & \omega  & \omega^2 & \omega^3 & \omega^4  \\
1 & \omega^2  & \omega^4  & \omega & \omega^3  \\
1 &  \omega^3 & \omega & \omega^4 & \omega^2  \\
1 &  \omega^4 & \omega^3 & \omega^2 & \omega 
\end{pmatrix*}
\]
where $\omega = e^{-2 \pi i / 5}$ (see Haagerup \cite{Haag97}, \cite{Haag12}).  Any such matrix must be a latin square in the symbols $1,\omega, \dots, \omega^4$, as a consequence of the following statement.

\begin{proposition} \label{P:linind}
If $\omega$ is a fifth root of unity, and $n_{1}, n_{2}, n_{3}, n_{4}$ are distinct integers modulo 5, then $\{ \omega^{n_{1}}, \omega^{n_{2}}, \omega^{n_{3}}, \omega^{n_{4}} \}$ is linearly independent over $\mathbb{Q}$.
\end{proposition}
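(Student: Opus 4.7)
The plan is to use the fact that the minimal polynomial of a primitive fifth root of unity over $\mathbb{Q}$ is the cyclotomic polynomial $\Phi_5(x) = 1 + x + x^2 + x^3 + x^4$, which is irreducible of degree 4. (The case $\omega = 1$ is implicitly excluded, since otherwise the statement is false; as the proposition is invoked in the context of $\mathcal{F}_5$, we interpret $\omega$ as a primitive fifth root of unity.)

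First I would observe that $[\mathbb{Q}(\omega):\mathbb{Q}] = 4$, so $\{1, \omega, \omega^2, \omega^3\}$ is a $\mathbb{Q}$-basis for $\mathbb{Q}(\omega)$. Therefore the space of $\mathbb{Q}$-linear relations among $\{1, \omega, \omega^2, \omega^3, \omega^4\}$, viewed as a subspace of $\mathbb{Q}^5$, has dimension exactly 1. Since $1 + \omega + \omega^2 + \omega^3 + \omega^4 = 0$ is such a relation, every $\mathbb{Q}$-linear relation among these five elements is a scalar multiple of $(1,1,1,1,1)$; in particular, any nontrivial relation has all five coefficients nonzero.

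Next I would translate this into the statement of the proposition. Given distinct $n_1, n_2, n_3, n_4 \in \{0, 1, 2, 3, 4\}$, suppose $\sum_{j=1}^{4} a_j \omega^{n_j} = 0$ with $a_j \in \mathbb{Q}$. Extend this to a relation on all five powers by setting the coefficient of the missing power $\omega^{n_0}$ to $0$. By the previous paragraph, this extended relation must be a scalar multiple of $(1,1,1,1,1)$; since one coordinate is zero, the scalar is zero, so all $a_j = 0$.

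No real obstacle is expected here, as the argument is a direct application of the irreducibility of $\Phi_5$ over $\mathbb{Q}$ (which itself follows from Eisenstein applied to $\Phi_5(x+1)$, or is simply quoted as standard). The only subtlety to flag is the interpretation of ``fifth root of unity'' — the statement only makes sense when $\omega$ is primitive, which is the case relevant to $\mathcal{F}_5$.
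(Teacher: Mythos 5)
Your proposal is correct and takes essentially the same route as the paper: both rest on $[\mathbb{Q}(\omega):\mathbb{Q}]=4$ with basis $\{1,\omega,\omega^2,\omega^3\}$. You simply make explicit the step the paper compresses into ``it follows'' --- namely that the relation space among all five powers is one-dimensional, spanned by $(1,1,1,1,1)$, so a relation with a zero coefficient is trivial --- and you rightly note that $\omega$ must be taken primitive for the statement to hold.
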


\begin{proof}
This is a well-known result from elementary number theory, but we include the key points of the proof here.  The field $\mathbb{Q}[\omega]$ is a field extension of $\mathbb{Q}$ of dimension 4, with basis $\{1, \omega, \omega^2, \omega^3 \}$.  It follows that any 4 distinct powers (modulo 5) of $\omega$ is linearly independent over $\mathbb{Q}$.
\end{proof}

Our main result is analogous to the $4 \times 4$ case.

\begin{theorem}  \label{T:5x5H}
If $A$ and $B$ are two $5 \times 5$ Hadamard matrices in dephased form, and $tr(A) = tr(B)$, then $A$ and $B$ are spectrally equivalent.
\end{theorem}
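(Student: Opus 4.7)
My plan is to reduce every dephased $5 \times 5$ Hadamard matrix to the canonical form $\mathcal{F}_5 P_\gamma$ with $\gamma \in \mathrm{Sym}\{1,2,3,4\}$, deduce from the trace hypothesis and Proposition~\ref{P:linind} that the diagonal multisets coincide, then exhibit spectrum-preserving operations that act transitively on each multiset class.

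Since any dephased $5 \times 5$ Hadamard matrix has the form $A = P_\alpha^T \mathcal{F}_5 P_\beta$ with $\alpha, \beta$ permutations fixing $0$, the rearrangement $A = P_\alpha^T (\mathcal{F}_5 P_{\beta \alpha^{-1}}) P_\alpha$ shows $A$ is permutationally (and hence spectrally) equivalent to $\mathcal{F}_5 P_{\gamma_A}$, where $\gamma_A := \beta \alpha^{-1} \in \mathrm{Sym}\{1,2,3,4\}$. A direct computation gives
\[
\mathrm{tr}(\mathcal{F}_5 P_\gamma) \;=\; \frac{1}{\sqrt{5}}\left(1 + \sum_{j=1}^{4} \omega^{j\gamma(j)}\right),
\]
so Proposition~\ref{P:linind} implies that the hypothesis $\mathrm{tr}(A) = \mathrm{tr}(B)$ is equivalent to the equality of multisets $\{j\gamma_A(j) \bmod 5 : j = 1,\dots,4\} = \{j\gamma_B(j) \bmod 5 : j = 1,\dots,4\}$.

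Writing $m_c : k \mapsto ck \bmod 5$ for multiplication by $c \in \mathbb{Z}_5^{*}$ (as a permutation of $\{1,2,3,4\}$), the entrywise identity $P_{m_c}^T \mathcal{F}_5 = \mathcal{F}_5 P_{m_c}$ yields the unitary conjugation
\[
P_{m_c}^T (\mathcal{F}_5 P_\gamma) P_{m_c} \;=\; \mathcal{F}_5 P_{m_c \gamma m_c},
\]
while $(\mathcal{F}_5 P_\gamma)^T = P_{\gamma^{-1}} \mathcal{F}_5$ has the same spectrum as $\mathcal{F}_5 P_{\gamma^{-1}}$ by the identity $\mathrm{Spec}(XY) = \mathrm{Spec}(YX)$. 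A direct substitution ($k = cj$ in the first case, $k = \gamma^{-1}(j)$ in the second) shows that both operations $\gamma \mapsto m_c \gamma m_c$ and $\gamma \mapsto \gamma^{-1}$ preserve the multiset $\{j\gamma(j) \bmod 5\}$.

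The main obstacle is the remaining verification: on the $24$ elements of $\mathrm{Sym}\{1,2,3,4\}$, these two operations must act transitively within each multiset class. This is a finite, elementary case check. A parity constraint (the product of the four multiset entries must equal $(4!)^2 \equiv 1 \pmod 5$) limits the achievable multisets to $10$; of these, four multisets of the form $\{k,k,k,k\}$ give singleton classes, two further classes have size $2$ and are connected by a single $m_c$-conjugation or by inversion, and the four remaining classes each have size $4$ and are connected by short explicit chains combining the two operations. Together with the trace-to-multiset correspondence from Proposition~\ref{P:linind}, this establishes the theorem.
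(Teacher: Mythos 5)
Your argument is correct, and it takes a genuinely different route from the paper. The paper first shows (Lemma \ref{L:diag}) that the trace pins down the diagonal, then performs a case analysis on the diagonal pattern of the core (4 of a kind, 2 pair, 1 pair plus singletons, etc.), working with partially filled latin squares, pivot positions, and the ``pair up'' obstruction of Proposition \ref{P:pairup} and Corollary \ref{C:pairup} to rule out impossible patterns and to match up the surviving completions by conjugation and transposition; the symmetric case is outsourced to Theorem \ref{T:fnperm}. You instead normalize every dephased matrix to $\mathcal{F}_5 P_\gamma$ with $\gamma\in\mathrm{Sym}\{1,2,3,4\}$ (the rearrangement $P_\alpha^T\mathcal{F}_5P_\beta=P_\alpha^T(\mathcal{F}_5P_{\beta\alpha^{-1}})P_\alpha$ is valid, and dephasedness does force $\alpha,\beta$ to fix $0$), translate the trace condition into equality of the multisets $\{j\gamma(j)\bmod 5\}$ via Proposition \ref{P:linind}, and let the two spectrum-preserving moves $\gamma\mapsto m_c\gamma m_c$ and $\gamma\mapsto\gamma^{-1}$ act on $S_4$. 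I checked your finite verification: the $24$ permutations do fall into exactly the $10$ multiset classes you describe ($4$ singletons $\{k,k,k,k\}$, the two doubletons $\{\mathrm{id},m_4\}$ and $\{m_2,m_3\}$, and four classes of size $4$), and the two moves are transitive on each (e.g.\ $m_2(3\,4)m_2=(1\,4\,3\,2)$ and $m_2(1\,2)m_2=(1\,2\,3\,4)$ chain together the class $\{(1\,2),(3\,4),(1\,2\,3\,4),(1\,4\,3\,2)\}$; $m_3(2\,3\,4)m_3=(1\,2\,3)$ handles the class of $3$-cycles with multiset $\{1,1,2,3\}$; and similarly for the other two). Your approach buys a uniform treatment of the symmetric and nonsymmetric cases, avoids the latin-square/pair-up machinery entirely (every $\mathcal{F}_5P_\gamma$ is automatically Hadamard, so no completion needs to be ruled out), and yields the explicit list of spectral classes as a byproduct; the paper's approach, while more laborious, extracts structural information (which diagonal patterns cannot occur) that it reuses elsewhere. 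The only thing separating your plan from a finished proof is writing out the transitivity chains explicitly, which is routine and which your summary of the class structure already reflects accurately.
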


We will proceed by considering symmetric and non-symmetric matrices separately.  This will not cause a problem, because we will show in due course that if $tr(A) = tr(B)$, then either both are symmetric or both are non-symmetric.  In the non-symmetric case, we will focus on the core part of the matrices, which will involve case analysis.  We will consider incomplete core parts, which will have non-unique completions--if the determined positions in the incomplete core part do not uniquely determine the remaining positions, we will have a \emph{pivot position}, an entry in the matrix that has several possibilities.  We will complete the core part to obtain a latin square, but there will be instances when the resulting matrix will not be a Hadamard matrix.  For example, the ``core'' of the following dephased matrix
\[ \frac{1}{\sqrt{5}}
\begin{pmatrix*}
1 & 1 & 1 & 1 & 1 \\
1 & \omega  & \omega^2 & \omega^3 & \omega^4  \\
1 & \omega^2  & \omega  & \omega^4 & \omega^3  \\
1 &  \omega^3 & \omega^4 & \omega^2 & \omega  \\
1 &  \omega^4 & \omega^3 & \omega & \omega^2 
\end{pmatrix*}
\]
is a latin square, but the matrix is not Hadamard because it is not unitary (consider the inner product of the last two rows).  

Since the core of a Hadamard matrix is a latin square, for any two rows $j$ and $j'$, there exists a permutation $\tau$ of $\{1,2,3,4\}$ such that the core of $H$ satisfies the equation $H[j,l] =H[j', \tau(l)]$.  

\begin{definition} \label{D:pairup}
We say that two rows $j$, $j'$ of $H$ \emph{pair up} if the permutation $\tau$ of $\{1,2,3,4\}$ such that the core of $H$ satisfies the equation 
\[ H[j,l] = H[j',\tau(l)] \]
has order 2.
\end{definition}

We have the following structural restriction for the core part of a $5 \times 5$ Hadamard matrix.

\begin{proposition} \label{P:pairup}
If $H$ is a $5 \times 5$ Hadamard matrix and rows $j$ and $j'$ of $H$ pair up, then we must have for every $k=1,\dots,4$,
\[ \tilde{H}[j,k] + \tilde{H}[j',k]  \equiv 0 \mod 5.\]
where $\tilde{H}[j,k]\in\{0,1,2,3,4\}$ denotes the power of $\omega$ that appears in the entry $H[j,k]$ of the matrix $H$.
\end{proposition}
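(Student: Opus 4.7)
The plan is to combine orthogonality of rows $j$ and $j'$ of $H$ with the structural consequences of $\tau$ having order $2$. Since each row of the core is a Latin-square row, write $H[j,k]=\frac{1}{\sqrt 5}\omega^{\tilde H[j,k]}$ with $\tilde H[j,k]\in\{1,2,3,4\}$ for $k\ne 0$ and $\tilde H[j,0]=0$. Using $H[j,0]=H[j',0]=\frac{1}{\sqrt 5}$, orthogonality of the two full rows gives
\[
1+\sum_{k=1}^{4}\omega^{\tilde H[j,k]-\tilde H[j',k]}=0.
\]
Because $1+\omega+\omega^2+\omega^3+\omega^4=0$ is, up to $\mathbb Z$-scaling, the only relation among $\{1,\omega,\omega^2,\omega^3,\omega^4\}$ (a direct consequence of Proposition \ref{P:linind}: if some fifth root were absent from such a relation, the remaining four would be linearly dependent over $\mathbb Q$), the multiset $\{0\}\cup\{\tilde H[j,k]-\tilde H[j',k]\pmod 5:k=1,\dots,4\}$ equals $\{0,1,2,3,4\}$. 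Hence the four differences $\tilde H[j,k]-\tilde H[j',k]\pmod 5$ form a permutation of $\{1,2,3,4\}$; in particular none is $0$.

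Next I would use this to pin down the cycle shape of $\tau$. A fixed point $\tau(l)=l$ would force $\tilde H[j,l]=\tilde H[j',l]$, contradicting the previous step, so $\tau$ is a fixed-point-free involution of $\{1,2,3,4\}$ and therefore a product of two disjoint transpositions $\tau=(l_1\,l_1')(l_2\,l_2')$. The pairing-up identity $\tilde H[j,l]=\tilde H[j',\tau(l)]$ then gives $\tilde H[j',l_i]=\tilde H[j,l_i']$ and $\tilde H[j',l_i']=\tilde H[j,l_i]$, so the two column-differences at positions $l_i$ and $l_i'$ are $d_i:=\tilde H[j,l_i]-\tilde H[j,l_i']$ and $-d_i$ modulo $5$.

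Combining the two observations, the four differences $\{d_1,-d_1,d_2,-d_2\}\pmod 5$ must equal $\{1,2,3,4\}=\{\pm 1,\pm 2\}$, so one of the 2-element subsets $\{\tilde H[j,l_i],\tilde H[j,l_i']\}$ of $\{1,2,3,4\}$ has its entries differing by $\pm 1\pmod 5$ and the other by $\pm 2\pmod 5$, while together they partition $\{1,2,3,4\}$. A direct inspection of the three partitions of $\{1,2,3,4\}$ into two pairs shows that only $\{\{1,4\},\{2,3\}\}$ produces one pair of each type; the other two partitions, $\{\{1,2\},\{3,4\}\}$ and $\{\{1,3\},\{2,4\}\}$, yield two $\pm 1$-pairs or two $\pm 2$-pairs and are ruled out. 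In both $\{1,4\}$ and $\{2,3\}$ the entries sum to $5$, so
\[
\tilde H[j,l_i]+\tilde H[j',l_i]=\tilde H[j,l_i]+\tilde H[j,l_i']\equiv 0\pmod 5,
\]
and symmetrically for $l_i'$, yielding the conclusion for every $k\in\{1,2,3,4\}$.

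The main obstacle is the last combinatorial observation identifying $\{\{1,4\},\{2,3\}\}$ as the unique partition compatible with the orthogonality constraint; once this is in place the result falls out immediately. Everything else is the routine translation of row-orthogonality of $H$ into a $\mathbb Q$-linear relation on fifth roots of unity via Proposition \ref{P:linind}.
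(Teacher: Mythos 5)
Your proof is correct and takes essentially the same route as the paper's: orthogonality of the two paired rows produces a vanishing sum of five fifth roots of unity, Proposition \ref{P:linind} forces the exponent differences to exhaust all residues mod $5$, and a short combinatorial check then pins down the exponent pairing $\{1,4\},\{2,3\}$. A small bonus of your write-up is that you explicitly show $\tau$ is fixed-point-free before treating it as a product of two disjoint transpositions, a point the paper passes over when it normalizes $\tau$ to $(12)(34)$.
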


\begin{proof}
Without loss of generality, suppose that rows 1 and 2 of  $H$ pair up.  If necessary, we may permute the columns of $H$ so that in Definition \ref{D:pairup}, $\tau = (12)(34)$.
Thus, we have the following core of $H$:
\[ 
\sqrt{5} H_c = 
\begin{pmatrix*}[r]
a & b & c & d \\
b & a & d & c \\
* & * & * & * \\
* & * & * & *
\end{pmatrix*}
\]
If necessary, permute the columns of $H$ again so that $a = \omega$, and denote $b = \omega^{n_2}$, $c = \omega^{n_3}$, and $d = \omega^{n_4}$.  We have
\begin{equation} \label{E:ortho}
1 + \omega^{1-n_2} + \omega^{n_2 -1} + \omega^{n_3 - n_4} + \omega^{n_4 - n_3} = 0, 
\end{equation}
so by Proposition \ref{P:linind}, none of the exponents in the above equation can be repeated.  If $n_{2} = 2$, then $|n_{3} - n_{4}| = 1 = n_{2} - 1$, which would violate the uniqueness of the exponents.  Likewise, if $n_{2} = 3$, then $|n_{3} - n_{4}| = 2 = n_2 - 1$, again a violation.  Thus, we must have $n_{2} = 4$ and $\{n_{3}, n_{4} \} = \{2,3\}$.
\end{proof}

\begin{corollary} \label{C:pairup}
Suppose $K$ is a $5 \times 5$ matrix whose entries are fifth roots of unity and whose column $0$ and row $0$ contain only $1$'s.  If one row of $K$ pairs up with two other rows, then $\frac{1}{\sqrt{5}}K$ is not Hadamard.
\end{corollary}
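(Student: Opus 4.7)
The plan is to argue by contradiction: assume $\frac{1}{\sqrt{5}}K$ is Hadamard, and derive that two rows of $K$ must be identical, which contradicts the orthogonality of the rows of a unitary matrix.

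Suppose row $j$ pairs up with two distinct rows $j'$ and $j''$. Since $\frac{1}{\sqrt{5}}K$ is assumed to be Hadamard, Proposition \ref{P:pairup} applies to each of the pair-ups. Applied to the pair $(j,j')$ and then to the pair $(j,j'')$, it yields, for every $k \in \{1,2,3,4\}$,
\[ \tilde{K}[j,k] + \tilde{K}[j',k] \equiv 0 \pmod{5}, \qquad \tilde{K}[j,k] + \tilde{K}[j'',k] \equiv 0 \pmod{5}. \]
Subtracting one congruence from the other gives $\tilde{K}[j',k] \equiv \tilde{K}[j'',k] \pmod{5}$ for $k=1,2,3,4$, i.e.\ $K[j',k] = K[j'',k]$ on columns $1$ through $4$. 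Moreover, both rows $j'$ and $j''$ begin with $1$ in column $0$ by the standing hypothesis. Hence the full rows $j'$ and $j''$ of $K$ are identical.

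But two identical rows have inner product equal to $5$, while orthogonality of the rows of the unitary matrix $\frac{1}{\sqrt{5}}K$ demands this inner product be $0$. This is the desired contradiction, so $\frac{1}{\sqrt{5}}K$ cannot be Hadamard.

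There is no real obstacle here: the content of the corollary is a direct bookkeeping consequence of Proposition \ref{P:pairup}, with the only subtlety being the (harmless) logical point that Proposition \ref{P:pairup} is invoked under the contradictory hypothesis that $K/\sqrt{5}$ is Hadamard. No case analysis on which specific row pairs up with which is needed, because the argument is symmetric in $j'$ and $j''$.
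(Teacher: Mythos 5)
Your proof is correct and follows essentially the same route as the paper: both apply Proposition \ref{P:pairup} to the two pair-ups and conclude that rows $j'$ and $j''$ must agree, which is impossible. The only cosmetic difference is the final contradiction — the paper notes that column $1$ would contain a repeated entry, violating the latin square property, whereas you observe that the two full rows coincide, violating orthogonality; both are immediate from the same congruences.
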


\begin{proof}
If the matrix $\sqrt{5} K$ is not a latin square, it is not a Hadamard matrix.  Thus, suppose $\sqrt{5} K$ is a latin square.  Permute the rows of $K$ so that row 1 and row 2 of $K$ pair up, as well as row 1 and row 3.  By Proposition \ref{P:pairup}, we have $\tilde{K}[2,1] = -\tilde{K}[1,1] = \tilde{K}[3,1]$ modulo $5$, thus  $\sqrt{5} K$ is not a latin square, a contradiction.
\end{proof}

\begin{lemma} \label{L:diag}
If $A$ and $B$ are $5 \times 5$ Hadamard matrices in dephased form, and $tr(A) = tr(B)$, then $diag(A) = diag(B)$, counting multiplicities.
\end{lemma}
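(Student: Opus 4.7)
The plan is to observe that, up to the common factor $1/\sqrt{5}$, the diagonal entries of a dephased $5\times 5$ Hadamard matrix are very constrained, and then to apply the linear independence established in Proposition \ref{P:linind} to the trace equation.

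First, I would record the structural consequence of being dephased and the Latin square property from the paragraph following Proposition \ref{P:linind}. Since $A$ is dephased, $A[0,0] = \frac{1}{\sqrt{5}}$, which contributes the summand $\frac{1}{\sqrt{5}}$ (i.e.\ the power $\omega^0$) to $tr(A)$; identically for $B$. For $j \in \{1,2,3,4\}$, column $j$ of the core is a permutation of $\omega,\omega^2,\omega^3,\omega^4$ (the value $\omega^0=1$ already occupies row $0$ in the dephased column), so the diagonal entry $A[j,j]$ is of the form $\frac{1}{\sqrt 5}\omega^{a_j}$ with $a_j \in \{1,2,3,4\}$, and similarly $B[j,j] = \frac{1}{\sqrt 5}\omega^{b_j}$ with $b_j \in \{1,2,3,4\}$.

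Next I would translate $tr(A)=tr(B)$ into an equation on the diagonal powers. Writing
\[
tr(A) = \tfrac{1}{\sqrt{5}}\Bigl(1 + \sum_{j=1}^{4}\omega^{a_j}\Bigr), \qquad tr(B) = \tfrac{1}{\sqrt{5}}\Bigl(1 + \sum_{j=1}^{4}\omega^{b_j}\Bigr),
\]
the hypothesis gives $\sum_{j=1}^{4}\omega^{a_j} = \sum_{j=1}^{4}\omega^{b_j}$. For $k\in\{1,2,3,4\}$ let $c_k^A = \#\{j\in\{1,2,3,4\} : a_j = k\}$ and $c_k^B = \#\{j : b_j=k\}$ be the multiplicities. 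Then the trace equality becomes
\[
\sum_{k=1}^{4}(c_k^A - c_k^B)\,\omega^k = 0,
\]
an integer linear relation on the four powers $\omega,\omega^2,\omega^3,\omega^4$.

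Finally I would invoke Proposition \ref{P:linind}, which asserts the linear independence of these four powers over $\mathbb{Q}$, to conclude $c_k^A = c_k^B$ for every $k\in\{1,2,3,4\}$. Adding the common entry $\frac{1}{\sqrt 5}$ at position $(0,0)$, the multisets of diagonal entries of $A$ and of $B$ agree, which is exactly $diag(A) = diag(B)$ counting multiplicities. There is no real obstacle here: all the substance of the argument has already been packaged into Proposition \ref{P:linind}, and the only thing to notice is that the Latin square/dephased structure forbids $\omega^0$ from appearing on the diagonal below position $(0,0)$, so the trace identity lives inside the linearly independent set $\{\omega,\omega^2,\omega^3,\omega^4\}$ rather than the full (linearly dependent) set $\{1,\omega,\ldots,\omega^4\}$.
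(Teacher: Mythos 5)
Your proposal is correct and follows essentially the same route as the paper: write the trace as $1$ plus an integer combination of $\omega,\omega^2,\omega^3,\omega^4$ with the diagonal multiplicities as coefficients, subtract, and invoke Proposition \ref{P:linind}. The only difference is that you make explicit the (paper-implicit) observation that the core's diagonal entries avoid $\omega^0$, which is a worthwhile clarification but not a different argument.
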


\begin{proof}
We write $tr(A) = \sum_{j=0}^{4} m_{j} \omega^{j} = 1 + \sum_{j=1} m_{j} \omega^{j}$ and $tr(B) = \sum_{j=0}^{4} n_{j} \omega^{j} = 1 + \sum_{j=1}^{4} n_{j} \omega^j$, where $m_{j}$ and $n_{j}$ are the multiplicities of $\omega^{j}$ on the diagonal of $A$ and $B$, respectively.  We have $0 = tr(A) - tr(B) = \sum_{j=1}^{4} (m_{j} - n_{j}) \omega^{j}$; by Proposition \ref{P:linind}, we must have $m_{j} = n_{j}$ for all $j$.
\end{proof}

We are now in position to prove Theorem \ref{T:5x5H}.

\begin{proof}  Our assumption is that $tr(A) = tr(B)$.  By Lemma \ref{L:diag}, the trace of $A$ uniquely determines the diagonal entries, thus we will prove that any two matrices with the same diagonal are spectrally equivalent.  We will consider here non-symmetric $A$; in Section  \ref{S:nxn} we will prove that if $A$ is a symmetric $5 \times 5$ Hadamard matrix then it is spectrally equivalent to either $\mathcal{F}_{5}$ or $\mathcal{F}_{5} P_{(\cdot 2)}$ in Theorem   \ref{T:fnperm}.

Therefore, we proceed now under the assumption that $A$ is a nonsymmetric matrix; we proceed by considering separate cases of the diagonal of the core part of $A$ having 4 of a kind, 3 of a kind, 2 pair, 1 pair and 2 singletons, and finally 4 singletons.  We proceed by considering first the latin square property of the core of $A$, and determine all possible latin squares with a given diagonal and, when possible, first row.  For a matrix with given diagonal and first row, when there is an entry which is not uniquely determined, we will \emph{pivot} on that position, i.e. we will consider all possible entries for that position.  We aim to prove that when there are several ways of completing the core to obtain a latin square which \emph{potentially} result in Hadamard matrices, then they are spectrally equivalent.  Note that we are not checking whether a latin square actually is the core of a Hadamard matrix; we are merely showing that if the cores do yield Hadamard matrices, then they are spectrally equivalent.

\noindent \textbf{4 of a kind:}  Consider the incomplete matrix:
\[ A_{4}(*) =
\begin{pmatrix*}[r]
a  & b & c & d \\
*  & a &  &  \\
  &  & a &  \\
  &  &  & a 
\end{pmatrix*}.
\]
Pivoting on the $*$ position, we have the following posibilities:
\[
[A_{4}(b)](*) =
\begin{pmatrix*}[r]
a  & b & c & d \\
b  & a & d & c \\
*  &  & a & b \\
  &  & b & a 
\end{pmatrix*}
\qquad
A_{4}(c) =
\begin{pmatrix*}[r]
a  & b & c & d \\
c  & a & d & b \\
b  & d & a & c \\
d  & c & b & a 
\end{pmatrix*}
\qquad
A_{4}(d) = 
\begin{pmatrix*}[r]
a  & b & c & d \\
d  & a & b & c \\
c  & d & a & b \\
b  & c & d & a 
\end{pmatrix*}.
\]
Note that $[A_{4}(b)](c)$ results in a symmetric matrix, which we are not allowing here, but which is impossible since it also violates Corollary \ref{C:pairup}.  Thus, we have three possible completions: $A_{4}(c)$, $A_{4}(d)$, and 
\[
[A_{4}(b)](d) =
\begin{pmatrix*}[r]
a  & b & c & d \\
b  & a & d & c \\
d  & c & a & b \\
c  & d & b & a 
\end{pmatrix*}.
\]

Now, let $\sigma$ be the permutation $(3 4)$. 
\[ P_{\sigma}^{T} A_{4}(d) P_{\sigma} =
\begin{pmatrix*}[r]
a  & b & d & c \\
d  & a & c & b\\
b  & c & a & d \\
c  & d & b & a 
\end{pmatrix*}.
\]
Note that in $A_{4}(c)$, $d \equiv -a \mod 5$, and in $P^{T} A_{4}(d) P$, $c \equiv -a \mod 5$ by Proposition \ref{P:pairup}.  Therefore, both matrices have anti-diagonal entries that are equal to each other.  Therefore, it follows that either $P_{\sigma}^{T} A_{4}(d) P_{\sigma} = A_{3}(c)$ or $P_{\sigma}^{T} A_{4}(d) P_{\sigma} = A_{3}(c)^{T}$, which, in either case, implies that $A_{4}(c)$ and $A_{4}(d)$ are spectrally equivalent.

Now let $\rho$ be the permutation $(1432)$; we have
\[ P_{\rho}^{T} [A_{4}(b)](d) P_{\rho} = 
\begin{pmatrix*}[r]
a  & c & d & b \\
d  & a & b & c\\
c  & b & a & d \\
b  & d & c & a 
\end{pmatrix*}.
\]
Therefore, again by Proposition \ref{P:pairup}, $b \equiv -a \mod 5$ and as above, either  $P_{\rho}^{T} [A_{4}(b)](d) P_{\rho} =  A_{4}(c)$ or $P_{\rho}^{T} [A_{4}(b)](d) P_{\rho} = A_{4}(c)^{T}$, so $[A_{4}(b)](d)$ and $A_{4}(c)$ are spectrally equivalent.

\noindent \textbf{3 of a kind:}  This is impossible, since if $3$ entries on the diagonal are equal, then the 4th entry must also be equal to them.

\noindent \textbf{2 pair:}  We claim that there are no non-symmetric $5 \times 5$ Hadamard matrices with two pair on the diagonal.  Consider the following incomplete matrix:
\[
\begin{pmatrix*}[r]
a  & c & d & b \\
*  & b & a &  \\
  & a & b &  \\
b  &  &  & a 
\end{pmatrix*}.
\]
We pivot on the $*$ position:  if that position is a $c$, then the remaining entries are determined, and will result in a symmetric matrix.  If instead, it is a $d$, we obtain:
\[
\begin{pmatrix*}[r]
a  & c & d & b \\
d  & b & a & c \\
c & a & b &  d\\
b  & d & c & a 
\end{pmatrix*}.
\]
This matrix has the property that row 1 pairs up with both rows 3 and 4.  Thus, by Corollary \ref{C:pairup}, the resulting matrix is not the core of a Hadamard matrix.

We will prove in Theorem \ref{T:fnperm} that for a symmetric $5 \times 5$ Hadamard matrix, the diagonal of the core must have two pair.  Note that this together with the argument just given implies that if $A$ and $B$ have common traces, then either both are symmetric or both are nonsymmetric, justifying the considering of symmetric versus nonsymmetric cases separately.

\noindent \textbf{1 pair, 2 singletons:}  Since the diagonal entries of a matrix may be permuted by conjugation by a permutation matrix, we may assume without loss of generality that the pair is in the first two positions on the diagonal.  Consider the two incomplete matrices:
\[ 
A_{1}(*) = 
\begin{pmatrix*}[r]
a  & * &  &  \\
  & a &  &  \\
  &  & b & a  \\
  &  &  a & c 
\end{pmatrix*}
\qquad
A_{2}(*) = 
\begin{pmatrix*}[r]
a  & * &  &  \\
  & a &  &  \\
  &  & c & a \\
  &  &  a & b 
\end{pmatrix*}.
\]
Recall that the trace determines the diagonal entries, so we only need to consider diagonal entries in the three symbols $a,b,c$. 
We pivot on the $*$ position; in both cases, choosing $d$ in the pivot position yields a matrix that cannot be completed to a latin square.  There are two possible completions of $A_{1}$:
\[ 
A_{1}(b) = 
\begin{pmatrix*}[r]
a  & b & c & d \\
c  & a & d & b \\
d  & c & b & a  \\
b  & d & a & c 
\end{pmatrix*}
\qquad
A_{1}(c) = 
\begin{pmatrix*}[r]
a  & c &  d & b \\
b  & a &  c & d \\
c  & d &  b & a \\
d  & b &  a & c 
\end{pmatrix*}.
\]
These are transposes of each other, and thus are spectrally equivalent.  Likewise, $A_{2}(b)^{T} = A_{2}(c)$ and so are spectrally equivalent.  We therefore need to verify that $A_{1}$ and $A_{2}$ are spectrally equivalent. We have that $A_{2}(b)$ can be obtained from $A_{1}(b)$ by interchanging the third and fourth rows and columns, i.e. if $\sigma$ is the permutation $(34)$, then we have
\[ P_{\sigma}^{T} A_{2}(b) P_{\sigma} = A_{1}(b). \]

\noindent \textbf{4 singletons:}  We claim that there are no $5 \times 5$ Hadamard matrices with distinct diagonal entries.  Consider the following incomplete matrix.
\[ A_{0}(*)
\begin{pmatrix*}[r]
a  & * &  &  \\
  & b &  &  \\
  &  & c &  \\
  &  &   & d 
\end{pmatrix*}.
\]
Filling in subsequent entries which are uniquely determined yields:
\[ A_{0}(d) = 
\begin{pmatrix*}[r]
a  & d & b & c \\
c  & b & d & a \\
d & a & c & b \\
b & c & a & d 
\end{pmatrix*}.
\]
Observe that row 1 pairs up with both row 2 and row 3, so by Corollary \ref{C:pairup}, this cannot be the core of a $5 \times 5$ Hadamard matrix.  Analogously, $A_{0}(c)$ cannot be the core of a $5 \times 5$ Hadamard matrix.

\end{proof}

\section{$n \times n$ symmetric Hadamard matrices} \label{S:nxn}

We now consider larger Hadamard matrices, but in a much smaller context.  In particular, we consider only Hadamard matrices which are symmetric and permutations of the Fourier matrix.

\begin{lemma} \label{L:fnperm}
Suppose $\tau$ and $\sigma$ are permutations of $\mathbb{Z}_{n}$ and
\[ P_{\tau}^{T} \mathcal{F}_{n} P_{\sigma} = \mathcal{F}_{n}. \]
Then there exists $p,r \in \mathbb{Z}_{n}$ such that $\tau(j) \equiv pj \mod n$, $\sigma(k) \equiv rk \mod n$, and $pr \equiv 1 \mod n$.
\end{lemma}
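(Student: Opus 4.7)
The plan is to compute both sides of the identity using the explicit formula $\mathcal{F}_n[j,k]=\tfrac{1}{\sqrt n}e^{2\pi i jk/n}$ and convert the matrix equation into a congruence in $\mathbb{Z}_n$. Using the identities derived in the introduction for $(P_\tau^T M P_\sigma)[j,k]=M[\tau(j),\sigma(k)]$, the hypothesis becomes
\[
\tau(j)\sigma(k) \equiv jk \pmod n \qquad \text{for all } j,k \in \mathbb{Z}_n.
\]
Everything in the proof will be extracted from this single congruence by specializing $j$ and $k$.

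Next I would pin down $\tau(0)$ and $\sigma(0)$. Setting $j=0$ gives $\tau(0)\sigma(k)\equiv 0 \pmod n$ for every $k$; since $\sigma$ is a bijection of $\mathbb{Z}_n$, the values $\sigma(k)$ range over all residues, so $\tau(0)\cdot m\equiv 0 \pmod n$ for every $m\in\mathbb{Z}_n$, forcing $\tau(0)\equiv 0$. Symmetrically $\sigma(0)\equiv 0$. This fixing of the $0$ entry is the step I'd flag as the only subtle point, because one has to rule out the possibility of zero divisors creating spurious solutions — the bijectivity of $\sigma$ is exactly what does this.

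With that in hand, set $p:=\tau(1)$ and $r:=\sigma(1)$. Taking $j=k=1$ in the congruence immediately yields $pr\equiv 1\pmod n$, so $p$ and $r$ are units in $\mathbb{Z}_n$ and are mutual inverses. Then taking $j=1$ in the congruence gives $p\,\sigma(k)\equiv k\pmod n$, i.e.\ $\sigma(k)\equiv rk\pmod n$ for all $k$; and taking $k=1$ gives $\tau(j)\sigma(1)\equiv j$, i.e.\ $\tau(j)\equiv pj\pmod n$ for all $j$. This gives the conclusion with the stated $p$ and $r$ satisfying $pr\equiv 1 \pmod n$, and also confirms automatically that the maps $j\mapsto pj$ and $k\mapsto rk$ are indeed permutations of $\mathbb{Z}_n$.
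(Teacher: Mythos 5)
Your proposal is correct and follows essentially the same route as the paper: both reduce the matrix identity to the congruence $\tau(j)\sigma(k)\equiv jk \pmod n$, specialize at $j=k=1$ to get a pair of mutually inverse units, and then specialize one index at a time to read off $\tau$ and $\sigma$ (your $p=\tau(1)$, $r=\sigma(1)$ agree with the paper's $p=\sigma(1)^{-1}$, $r=\tau(1)^{-1}$ since $\tau(1)\sigma(1)\equiv 1$). Your preliminary step pinning down $\tau(0)\equiv\sigma(0)\equiv 0$ is valid but superfluous, since the $j=1$ and $k=1$ specializations already cover the index $0$ once you know $\tau(1)$ and $\sigma(1)$ are units.
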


\begin{proof}
We have that
\[ P_{\tau}^{T} \mathcal{F}_{n} P_{\sigma} [j,k] = \mathcal{F}_{n}[ \tau(j), \sigma(k) ] = \dfrac{1}{\sqrt{n}} e^{2 \pi i \tau(j) \sigma(k)/n}. \]
Thus, we must have for every $j,k \in \mathbb{Z}_{n}$,
\[ \tau(j) \sigma(k) \equiv j k \mod n. \]
For $j,k = 1$, we obtain $\tau(1) \sigma(1) \equiv 1 \mod n$, so both $\tau(1)$ and $\sigma(1)$ are units in $\mathbb{Z}_{n}$.  Therefore we have for every $k \in \mathbb{Z}_{n}$, $ \sigma(k) \equiv \tau(1)^{-1} k \mod n$, so $r = \tau(1)^{-1}$.  Likewise, for every $j \in \mathbb{Z}_{n}$, $\tau(j) \equiv \sigma(1)^{-1} j \mod n$, so $p = \sigma(1)^{-1}$.  Finally, $\tau(1) \sigma(1) \equiv 1 \mod n$ implies that $p r \equiv 1 \mod n$.
\end{proof}

A mapping $\tau$ of $\mathbb{Z}_{n}$ given by $\tau(j) = mj \mod n$ is a permutation if and only if $m \in \mathbb{Z}_{n}$ is a unit.  For such a permutation, we will denote the matrix $P_{\tau}$ by $P_{(\cdot m)}$.  An easy calculation demonstrates the following important commutation relation:

\begin{lemma} \label{L:commute}
For any $n \in \mathbb{N}$ and any unit $m \in \mathbb{Z}_{n}$,
\[ \mathcal{F}_{n} P_{(\cdot m)}^{T} = P_{(\cdot m)} \mathcal{F}_{n}. \]
\end{lemma}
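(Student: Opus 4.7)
The plan is to establish the identity by direct entry-wise comparison, relying on the bookkeeping conventions already recorded in the introduction: $(M P_{\sigma})[j,k] = M[j, \sigma(k)]$, $(P_{\sigma}^{T} M)[j,k] = M[\sigma(j), k]$, and $P_{\sigma}^{T} = P_{\sigma^{-1}}$. Since $m \in \mathbb{Z}_{n}$ is a unit it has a multiplicative inverse $m^{-1}$, and the inverse of the permutation $j \mapsto m j$ is $j \mapsto m^{-1} j$; hence $P_{(\cdot m)}^{T} = P_{(\cdot m^{-1})}$ and, dually, $P_{(\cdot m)} = P_{(\cdot m^{-1})}^{T}$.

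With that preparation, I would evaluate each side at an arbitrary index $(j,k)$. Applying the first formula to the left-hand side gives
\[ (\mathcal{F}_{n} P_{(\cdot m^{-1})})[j,k] = \mathcal{F}_{n}[j, m^{-1} k] = \frac{1}{\sqrt{n}} e^{2 \pi i \, j (m^{-1} k) / n}, \]
and applying the second formula to the right-hand side gives
\[ (P_{(\cdot m^{-1})}^{T} \mathcal{F}_{n})[j,k] = \mathcal{F}_{n}[m^{-1} j, k] = \frac{1}{\sqrt{n}} e^{2 \pi i \, (m^{-1} j) k / n}. \]
These two entries coincide because the exponent $m^{-1} j k$ is the same element of $\mathbb{Z}_{n}$ regardless of how the triple product is parenthesized; the equality holds at every $(j,k)$, so the matrices agree.

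I do not expect any genuine obstacle here: the statement is essentially a clean bookkeeping identity expressing the self-duality of the multiplicative scaling action on $\mathcal{F}_{n}$, and the verification amounts to a single line of exponent arithmetic. The only item requiring care is the direction of the inverse — mistakenly equating $P_{(\cdot m)}^{T}$ with $P_{(\cdot m)}$ would place the wrong $m$ in the exponent — and keeping in mind that the permutation $(\cdot m)$ is well-defined on $\mathbb{Z}_{n}$ precisely because $m$ is assumed to be a unit.
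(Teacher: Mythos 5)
Your proof is correct: the paper offers no written proof (it just asserts ``an easy calculation demonstrates''), and your direct entrywise verification via $P_{(\cdot m)}^{T} = P_{(\cdot m^{-1})}$ and the congruence $j(m^{-1}k) \equiv (m^{-1}j)k \pmod{n}$ is exactly the intended computation. The one point you rightly flag --- not confusing $P_{(\cdot m)}^{T}$ with $P_{(\cdot m)}$ --- is handled properly, so there is nothing to add.
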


\begin{theorem} \label{T:fnperm}
Suppose $M$ is an $n \times n$ Hadamard matrix in dephased form which is symmetric and permutationally equivalent to $\mathcal{F}_{n}$.  Then there exists a permutation $\tau$ of $\{1, \dots , n-1\}$ and a unit $m \in \mathbb{Z}_{n}$ such that
\[ M = P_{\tau}^{T} \mathcal{F}_{n} P_{(\cdot m)} P_{\tau}.  \]
The converse is also true.
\end{theorem}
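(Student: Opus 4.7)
The plan is to use the permutational equivalence to write $M = P_\pi^T \mathcal{F}_n P_{\pi'}$ for some permutations $\pi,\pi'$, then to exploit the dephased and symmetric hypotheses to show that $\pi' = (\cdot m) \circ \pi$ for some unit $m$ via Lemma~\ref{L:fnperm}. First I would observe that since the first row and first column of $M$ equal those of $\mathcal{F}_n$, the condition $M[0,k] = M[k,0] = \tfrac{1}{\sqrt n}$ forces $\mathcal{F}_n[\pi(0),\pi'(k)] = \mathcal{F}_n[\pi(j),\pi'(0)] = \tfrac{1}{\sqrt n}$ for all $j,k$; since $\pi'$ and $\pi$ are bijections on $\mathbb{Z}_n$, evaluating at the preimages of $1$ forces $\pi(0) = \pi'(0) = 0$, so $\pi$ and $\pi'$ may be regarded as permutations of $\{1,\dots,n-1\}$.

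Next I would exploit symmetry. Using $\mathcal{F}_n^T = \mathcal{F}_n$, the equation $M = M^T$ rearranges to
\[ P_\pi^T \mathcal{F}_n P_{\pi'} = P_{\pi'}^T \mathcal{F}_n P_\pi. \]
Multiplying on the left by $P_\pi$ and on the right by $P_\pi^T$ and using $P_\sigma P_\tau = P_{\sigma\tau}$ and $P_\sigma^T = P_{\sigma^{-1}}$, this becomes $\mathcal{F}_n P_\alpha = P_{\alpha^{-1}} \mathcal{F}_n$ where $\alpha := \pi'\pi^{-1}$. Since $P_{\alpha^{-1}} = P_\alpha^T$, a further left multiplication by $P_\alpha$ yields $P_\alpha \mathcal{F}_n P_\alpha = \mathcal{F}_n$, which I rewrite as $P_{\alpha^{-1}}^T \mathcal{F}_n P_\alpha = \mathcal{F}_n$ so that Lemma~\ref{L:fnperm} applies. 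The lemma then forces $\alpha$ to be multiplication by some unit $m \in \mathbb{Z}_n$ (its companion $\alpha^{-1}$ being multiplication by $m^{-1}$). Setting $\tau := \pi$ and unwinding $\pi' = \alpha\circ\pi$ gives $P_{\pi'} = P_{(\cdot m)} P_\tau$, so
\[ M = P_\tau^T \mathcal{F}_n P_{(\cdot m)} P_\tau, \]
as required.

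For the converse, I need to check three things of the matrix $M := P_\tau^T \mathcal{F}_n P_{(\cdot m)} P_\tau$: permutational equivalence to $\mathcal{F}_n$ is immediate since $P_{(\cdot m)} P_\tau$ is itself a permutation matrix; the dephased property follows from $\tau(0) = 0$ and $m\cdot 0 = 0$, which let one check that $Me_0$ and $e_0^T M$ each equal the constant $\tfrac{1}{\sqrt n}$ vector using that $\mathcal{F}_n e_0$ is constant; and symmetry reduces, via $\mathcal{F}_n^T = \mathcal{F}_n$, to verifying $P_{(\cdot m)}^T \mathcal{F}_n = \mathcal{F}_n P_{(\cdot m)}$, which is Lemma~\ref{L:commute} applied to $m^{-1}$ (using $P_{(\cdot m)}^T = P_{(\cdot m^{-1})}$). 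The only conceptual step is the algebraic manipulation in paragraph two that massages the symmetry relation into a hypothesis of Lemma~\ref{L:fnperm}; the rest is bookkeeping with permutation matrix identities.
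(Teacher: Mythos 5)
Your proposal is correct and follows essentially the same route as the paper: write $M = P_\pi^T \mathcal{F}_n P_{\pi'}$, use symmetry of $M$ and of $\mathcal{F}_n$ to derive $P_\alpha \mathcal{F}_n P_\alpha = \mathcal{F}_n$ for $\alpha = \pi'\pi^{-1}$, invoke Lemma~\ref{L:fnperm} to conclude $\alpha = (\cdot\, m)$, and verify the converse directly via Lemma~\ref{L:commute}. Your additional checks (that $\pi(0)=\pi'(0)=0$ from the dephased condition, and that the commutation identity needed is Lemma~\ref{L:commute} applied to $m^{-1}$) are details the paper leaves implicit, but they do not change the argument.
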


\begin{proof}
By hypothesis, there exists permutations $\sigma, \rho$ of $\mathbb{Z}_{n}$ such that $ M = P_{\sigma}^{T} \mathcal{F}_{n} P_{\rho}$.  Since $M$ is symmetric, we have
\[ \left( P_{\sigma}^{T} \mathcal{F}_{n} P_{\rho} \right)^{T} = P_{\rho}^{T} \mathcal{F}_{n} P_{\sigma} = P_{\sigma}^{T} \mathcal{F}_{n} P_{\rho}, \]
from which it follows that $(P_{\rho} P_{\sigma}^{T}) \mathcal{F}_{n} (P_{\rho} P_{\sigma}^{T} ) = \mathcal{F}_{n}$.  Thus, by Lemma \ref{L:fnperm}, $P_{\rho} P_{\sigma}^{T} = P_{(\cdot m)}$ for some unit $m \in \mathbb{Z}_{n}$.  It follows that $P_{\rho} = P_{(\cdot m)} P_{\sigma}$ and  $M = P_{\sigma}^{T} \mathcal{F}_{n} P_{(\cdot m)} P_{\sigma}$.

Conversely,  if $\tau$ is a permutation of $\{1, \dots, n-1\}$ and $m \in \mathbb{Z}_{n}$ is a unit, then $M = P_{\tau}^{T} \mathcal{F}_{n} P_{(\cdot m)} P_{\tau}$ is a unitary matrix with all entries having complex modulus $\frac{1}{\sqrt{n}}$ and so is Hadamard.  In addition, it is symmetric since
\[ M^{T} = P_{\tau}^{T} P_{(\cdot m)}^{T} \mathcal{F}_{n} P_{\tau} = P_{\tau}^{T} \mathcal{F}_{n} P_{(\cdot m)} P_{\tau} \]
by Lemma \ref{L:commute}.  Finally, it is easy to check that it is in dephased form.
\end{proof}

\begin{corollary} \label{C:fnperm}
Suppose $M$ is an $n \times n$ Hadamard matrix in dephased form which is symmetric and permutationally equivalent to $\mathcal{F}_{n}$.   Then there exists a unit $m \in \mathbb{Z}_{n}$ such that $M$ is spectrally equivalent to $\mathcal{F}_{n} P_{(\cdot m)}$.
\end{corollary}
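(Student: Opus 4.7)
The plan is to derive this corollary as an immediate consequence of Theorem \ref{T:fnperm}, simply by reinterpreting the factorization it provides as a unitary conjugation.

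First I would apply Theorem \ref{T:fnperm} to $M$: since $M$ is a dephased symmetric Hadamard matrix permutationally equivalent to $\mathcal{F}_n$, there exist a permutation $\tau$ of $\{1,\dots,n-1\}$ and a unit $m \in \mathbb{Z}_n$ with
\[ M = P_\tau^T \mathcal{F}_n P_{(\cdot m)} P_\tau. \]
Then I would group the factors as $M = P_\tau^T \bigl(\mathcal{F}_n P_{(\cdot m)}\bigr) P_\tau$.

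The key observation is that $P_\tau$ is a real orthogonal (hence unitary) matrix, so $P_\tau^T = P_\tau^{-1} = P_\tau^*$. Setting $U = P_\tau$, we obtain $M = U^* \bigl(\mathcal{F}_n P_{(\cdot m)}\bigr) U$, which by the definition of spectral equivalence given in the introduction is exactly the assertion that $M$ and $\mathcal{F}_n P_{(\cdot m)}$ are spectrally equivalent, extracting the same unit $m \in \mathbb{Z}_n$ produced by Theorem \ref{T:fnperm}.

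There is no real obstacle here; the entire content of the corollary is packaged inside the statement of Theorem \ref{T:fnperm}, and the step from ``conjugate by a permutation matrix'' to ``spectrally equivalent'' is immediate from the definition. The only thing one might wish to highlight is that the same $m$ works for both statements, so no additional choice is required.
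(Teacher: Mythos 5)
Your proposal is correct and is exactly the argument the paper intends: Theorem \ref{T:fnperm} gives $M = P_{\tau}^{T}\,\mathcal{F}_{n} P_{(\cdot m)}\, P_{\tau}$, and since $P_{\tau}$ is unitary with $P_{\tau}^{T} = P_{\tau}^{*}$, this is a unitary conjugation, which is the paper's definition of spectral equivalence. Nothing further is needed.
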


\begin{corollary}  \label{C:spectral}
Suppose $m \in \mathbb{Z}_{n}$ is a unit and $M = \mathcal{F}_{n} P_{(\cdot m)}$.  Then $M^{2} = P_{(\cdot (n-1))}$, $M^{4} = I$, and therefore, the spectrum of $M$ is contained in $\{ 1, -1, i, -i \}$.
\end{corollary}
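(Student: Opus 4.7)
The plan is to compute $M^2$ directly, reducing it to $\mathcal{F}_n^2$ via the commutation relation of Lemma \ref{L:commute}, identify $\mathcal{F}_n^2$ as a specific permutation matrix, and then square again. The spectral statement will be an immediate consequence.

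First I would write $M^2 = \mathcal{F}_n P_{(\cdot m)} \mathcal{F}_n P_{(\cdot m)}$ and apply Lemma \ref{L:commute} to the middle pair $P_{(\cdot m)} \mathcal{F}_n$. The lemma gives $\mathcal{F}_n P_{(\cdot m)}^T = P_{(\cdot m)} \mathcal{F}_n$, so substituting yields
\[ M^2 \;=\; \mathcal{F}_n \cdot \mathcal{F}_n P_{(\cdot m)}^T \cdot P_{(\cdot m)} \;=\; \mathcal{F}_n^2, \]
since $P_{(\cdot m)}^T P_{(\cdot m)} = I$. Note that the parameter $m$ drops out entirely, which is the key structural observation.

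Next I would evaluate $\mathcal{F}_n^2$ by a one-line entrywise computation: the $(j,k)$ entry is $\frac{1}{n}\sum_{l=0}^{n-1} e^{2\pi i (j+k) l / n}$, which equals $1$ when $j + k \equiv 0 \bmod n$ and $0$ otherwise. That is exactly the matrix $P_{(\cdot(n-1))}$, since $\tau(k) = (n-1) k \equiv -k \bmod n$. Thus $M^2 = P_{(\cdot(n-1))}$, establishing the first claim. For the second, I would simply note that $P_{(\cdot a)} P_{(\cdot b)} = P_{(\cdot ab)}$ for units $a, b \in \mathbb{Z}_n$, so
\[ M^4 = (M^2)^2 = P_{(\cdot(n-1))}^2 = P_{(\cdot(n-1)^2)} = P_{(\cdot 1)} = I, \]
using $(n-1)^2 \equiv 1 \bmod n$.

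Finally, since $M^4 = I$, the minimal polynomial of $M$ divides $x^4 - 1$, so every eigenvalue of $M$ is a fourth root of unity, giving $\mathrm{spec}(M) \subseteq \{1, -1, i, -i\}$. There is no real obstacle here; the only subtlety is making sure the commutation relation is applied with the correct transpose (and recalling that $\mathcal{F}_n$ is symmetric so Lemma \ref{L:commute} may be read in either direction), after which the argument is purely algebraic.
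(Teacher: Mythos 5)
Your proof is correct and follows essentially the same route as the paper: apply Lemma \ref{L:commute} to get $M^2 = \mathcal{F}_n^2 P_{(\cdot m)}^T P_{(\cdot m)} = \mathcal{F}_n^2 = P_{(\cdot(n-1))}$, then square again and invoke spectral mapping. The only (harmless) differences are that you verify $\mathcal{F}_n^2 = P_{(\cdot(n-1))}$ by a direct geometric-sum computation where the paper cites \cite{McPa72}, and you obtain $M^4 = I$ via $(n-1)^2 \equiv 1 \bmod n$ rather than via $\mathcal{F}_n^4 = I$.
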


\begin{proof}
We calculate:
\[ M^{2} = \mathcal{F}_{n} P_{(\cdot m)}  \mathcal{F}_{n} P_{(\cdot m)}  = \mathcal{F}_{n}^{2} P_{(\cdot m)}^{T} P_{(\cdot m)} \]
by Lemma \ref{L:commute}.  It is well known  (see e.g. \cite{McPa72}) that $\mathcal{F}_{n}^{2} = P_{(\cdot (n-1))}$.  Likewise, $M^{4} = \mathcal{F}_{n}^{4} = I$.  The result follows by the spectral mapping theorem.
\end{proof}

We now aim to calculate the multiplicities of the eigenvalues of $M = \mathcal{F}_{n} P_{(\cdot m)}$.  This will be accomplished in several steps, using known results on the eigenvalue multiplicities of $\mathcal{F}_{n}$ and Gauss sums.  We begin with Gauss sums, which is the trace of $M$.

\begin{definition}
Let $m,n \in \mathbb{Z}$ with $n>0$. Then the quadratic Gauss sum is
\[ g(m;n) = \sum_{j=0}^{n-1} e^{2 \pi i \frac{mj^2}{n} } . \]
\end{definition}

\begin{lemma} \label{L:trace}
The trace of the permutation of the Fourier matrix by $P_{(\cdot m)}$ is
\[ tr(\mathcal{F}_{n} P_{(\cdot m)} ) = \dfrac{1}{\sqrt{n}} g(m; n). \]
\end{lemma}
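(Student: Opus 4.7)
The plan is to compute the trace directly from the matrix entry formulas established earlier in the paper. Recall that the Fourier matrix has entries $\mathcal{F}_n[j,k] = \frac{1}{\sqrt{n}} e^{2\pi i jk/n}$, and that the identity $(M P_\sigma)[j,k] = M[j,\sigma(k)]$ was derived in the introduction. Applying this identity with $\sigma(k) = mk \bmod n$ yields
\[ (\mathcal{F}_n P_{(\cdot m)})[j,k] = \mathcal{F}_n[j, mk \bmod n] = \frac{1}{\sqrt{n}} e^{2\pi i j(mk)/n}, \]
where I have used that $e^{2\pi i j \ell/n}$ only depends on $\ell \bmod n$.

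Setting $k = j$ extracts the diagonal entries, and the trace becomes
\[ \operatorname{tr}(\mathcal{F}_n P_{(\cdot m)}) = \sum_{j=0}^{n-1} \frac{1}{\sqrt{n}} e^{2\pi i m j^2/n} = \frac{1}{\sqrt{n}} \sum_{j=0}^{n-1} e^{2\pi i m j^2/n} = \frac{1}{\sqrt{n}} g(m;n), \]
by the definition of the quadratic Gauss sum. There is no substantive obstacle here — the only thing to be careful about is correctly identifying which argument of the Fourier entry the permutation acts on (column, via right multiplication), and recognizing that the diagonal entry $e^{2\pi i m j^2 /n}$ is precisely the $j$-th summand of $g(m;n)$.
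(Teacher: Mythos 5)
Your proof is correct and follows essentially the same route as the paper: both compute the diagonal entries via $(\mathcal{F}_n P_{(\cdot m)})[j,j] = \mathcal{F}_n[j,mj] = \frac{1}{\sqrt{n}}e^{2\pi i m j^2/n}$ and sum to recognize the quadratic Gauss sum $g(m;n)$. Your added care about which index the right-multiplication permutes is consistent with the convention $(MP_\sigma)[j,k]=M[j,\sigma(k)]$ established in the introduction, so there is nothing to amend.
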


\begin{proof}
We calculate:
\begin{align*}
tr(\mathcal{F}_{n} P_{(\cdot m)}) &= \sum_{j=0}^{n-1} \mathcal{F}_{n} P_{(\cdot m)}[j,j] \\
&= \sum_{j=0}^{n-1} \mathcal{F}_{n} [j, mj] \\
&= \dfrac{1}{\sqrt{n}} \sum_{j=0}^{n-1} e^{2 \pi i j mj/n} \\
&= \dfrac{1}{\sqrt{n}} g(m; n).
\end{align*}
\end{proof}

The following Definition and Theorem are obtained from \cite{BEW98} and they determine the trace of $M$.  Using the value of the trace of $M$, we will be able to calculate the multiplicities.

\begin{definition}
Assume $m$ and $n$ are positive integers. Then define the Jacobi symbol by
\[ \left( \frac{n}{m} \right) = \left( \frac{n}{p_1} \right)^{\alpha_1} \left( \frac{n}{p_2} \right)^{\alpha_2} ... \left( \frac{n}{p_n} \right)^{\alpha_n}, \]
Where the prime factorization of $m$ is $m= p_1^{\alpha_1} p_2^{\alpha_2} ...p_n^{\alpha_n}$ and for a prime $p$, the Legendre symbol is
\[ \left( \frac{n}{p} \right), \]
which is $0$ if $n \equiv 0 \mod p$, $1$ if $n$ is a perfect square modulo $p$, and $-1$ otherwise.
\end{definition}

\begin{theorem} \label{book}
Let $m,n \in \mathbb{Z}$ be co-prime with $n>0$.

If $n \equiv 2 (\mod 4) $ then $g(m,n)=0$.

If $n \equiv 0 (\mod 4) $ then $g(m,n) = \left( \frac{n}{m} \right) (1+i^m) \sqrt{n}$.

If $n \equiv 1 (\mod 4) $ then $g(m,n) = \left( \frac{m}{n} \right)  \sqrt{n}$.

If $n \equiv 3 (\mod 4) $ then $g(m,n) = \left( \frac{m}{n} \right)  i\sqrt{n}$.

In the above formulas, $\left( \frac{a}{b} \right)$ denotes the Jacobi symbol.
\end{theorem}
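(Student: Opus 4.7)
The strategy is the classical reduction to prime-power moduli via multiplicativity of Gauss sums, followed by explicit evaluation in each case. The first step is to establish the multiplicativity identity: if $\gcd(a,b) = 1$ and $\gcd(m, ab) = 1$, then
\[ g(m, ab) = g(mb, a) \cdot g(ma, b). \]
This follows from the Chinese Remainder Theorem by writing each $j \in \mathbb{Z}/ab\mathbb{Z}$ uniquely as $j \equiv bu + av \pmod{ab}$ with $u \in \mathbb{Z}/a\mathbb{Z}$, $v \in \mathbb{Z}/b\mathbb{Z}$; expanding $j^2 = b^2 u^2 + 2abuv + a^2 v^2$, the cross term contributes a factor of $1$ to the exponential, and the double sum splits as a product. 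This reduces the problem to computing $g(m, p^k)$ at prime powers.

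Next I would dispatch the case $n \equiv 2 \pmod 4$: writing $n = 2n'$ with $n'$ odd, multiplicativity gives $g(m,n) = g(mn', 2) \cdot g(2m, n')$, and since $\gcd(m,n)=1$ forces $mn'$ odd, the first factor is $1 + e^{\pi i \cdot \text{odd}} = 0$. For odd $n$ with $\gcd(m,n) = 1$, a standard substitution $j \mapsto mj$ argument combined with the count of quadratic residues modulo $n$ yields $g(m,n) = \left( \frac{m}{n} \right) g(1,n)$, reducing matters to computing $g(1, n)$ for odd $n$. Multiplicativity once more reduces this to the odd-prime case $g(1,p)$.

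The evaluation of $g(1, p)$ for odd prime $p$ is Gauss's classical formula: $g(1,p) = \sqrt{p}$ if $p \equiv 1 \pmod 4$ and $g(1,p) = i\sqrt{p}$ if $p \equiv 3 \pmod 4$. The cleanest proof is Schur's, which identifies $g(1,p)$ as $\sqrt{p}$ times the trace of $\mathcal{F}_p$ and then computes the eigenvalue multiplicities of $\mathcal{F}_p$ from its minimal polynomial. Finally, for $n \equiv 0 \pmod 4$, write $n = 2^a n'$ with $a \geq 2$ and $n'$ odd; then multiplicativity plus a direct computation of $g(m, 2^a)$ using the pairing $j \mapsto j + 2^{a-1}$ produces the factor $(1 + i^m)\sqrt{2^a}$, and combining with the odd-factor evaluation yields the stated formula.

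The main obstacle is the explicit evaluation of $g(1,p)$ for odd primes, which is historically the deep step --- determining the correct sign took Gauss several years to resolve and is not accessible by elementary algebraic manipulation alone. Since the paper quotes this theorem directly from \cite{BEW98}, the practical approach is simply to cite that reference for the classical calculation; the surrounding reductions to the prime case are routine applications of CRT and the basic substitution techniques for character sums.
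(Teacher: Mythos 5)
The paper does not actually prove this theorem --- it is quoted verbatim from \cite{BEW98} with no argument supplied --- so there is no internal proof to compare against; your outline is the standard classical evaluation (CRT multiplicativity $g(m,ab)=g(mb,a)\,g(ma,b)$, the vanishing at $n\equiv 2 \pmod 4$, the substitution $j\mapsto mj$ producing the Jacobi-symbol factor, and Gauss's/Schur's determination of the sign of $g(1,p)$) and is essentially correct. The one step your sketch elides is that multiplicativity only reduces odd $n$ to prime powers $p^k$, not to primes, so one still needs the elementary recursion $g(1,p^k)=p\,g(1,p^{k-2})$ (and its analogue at $2^a$) to finish; this is routine, and in any case the whole evaluation is, as you note, properly imported from the cited reference.
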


We wish to compare the eigenvalues, and multiplicities, of the three matrices $\mathcal{F}_{n}$, $M$, and $P_{(\cdot (n-1))}$.   We know that $\sigma(\mathcal{F}_{n}) = \{ 1 , -1 , i , -i \}$, as well as the multiplicities from \cite{McPa72}, which we will specify below.  We also know that $\sigma(P_{(\cdot(n-1))}) = \{1, -1\}$.  For convenience,  let the multiplicities of eigenvalue $\lambda$ be denoted by $s_{\lambda}$ for $P_{(\cdot (n-1))}$, $t_{\lambda}$ for $M$, and $f_{\lambda}$ for $\mathcal{F}_{n}$.  The multiplicities of these three matrices are related as follows.

\begin{lemma} \label{L:mult}
The multiplicities  of the eigenvalues satisfy the following equations:
\[ t_{1} + t_{-1} = s_{1} = f_{1} + f_{-1}; \quad \text{ and } \quad t_{i} + t_{-i} = s_{-1} = f_{i} + f_{-i}. \]
\end{lemma}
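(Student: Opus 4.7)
The plan is to exploit the identity $M^{2}=P_{(\cdot(n-1))}$ (and the analogous $\mathcal{F}_{n}^{2}=P_{(\cdot(n-1))}$) from Corollary \ref{C:spectral} together with the spectral mapping theorem applied to the squaring map. The key observation is that $M$ is the product of two unitary matrices, hence unitary and therefore diagonalizable; the same is of course true of $\mathcal{F}_{n}$. For a diagonalizable matrix $T$, the algebraic multiplicity of $\mu$ as an eigenvalue of $T^{2}$ equals the sum of the algebraic multiplicities of all eigenvalues $\lambda$ of $T$ satisfying $\lambda^{2}=\mu$.

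Starting from this, I would split into two computations. First, since $\sigma(M)\subseteq\{1,-1,i,-i\}$ by Corollary \ref{C:spectral}, the eigenvalues of $M$ that square to $1$ are precisely $1$ and $-1$, while those that square to $-1$ are precisely $i$ and $-i$. Applying the spectral mapping principle to $M^{2}=P_{(\cdot(n-1))}$ therefore gives
\[
s_{1}\;=\;\text{mult}_{M^{2}}(1)\;=\;t_{1}+t_{-1},\qquad s_{-1}\;=\;\text{mult}_{M^{2}}(-1)\;=\;t_{i}+t_{-i}.
\]
Second, the same reasoning applied to $\mathcal{F}_{n}^{2}=P_{(\cdot(n-1))}$ (a well-known identity recorded in Corollary \ref{C:spectral}) yields
\[
s_{1}\;=\;f_{1}+f_{-1},\qquad s_{-1}\;=\;f_{i}+f_{-i},
\]
which chains together with the previous line to produce the claimed equalities.

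There is essentially no obstacle here beyond being careful that the spectral mapping genuinely preserves algebraic multiplicities when passing through the squaring map. This is where diagonalizability is used: if $M=U^{*}DU$ with $D$ diagonal, then $M^{2}=U^{*}D^{2}U$, so the multiset of eigenvalues of $M^{2}$ is obtained by squaring each diagonal entry of $D$ with its multiplicity, which is exactly the bookkeeping that gives the summation over preimages of the squaring map. Once this point is stated cleanly, the lemma is an immediate consequence of the two identities $M^{2}=\mathcal{F}_{n}^{2}=P_{(\cdot(n-1))}$ and the containment $\sigma(M),\sigma(\mathcal{F}_{n})\subseteq\{1,-1,i,-i\}$.
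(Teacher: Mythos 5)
Your proposal is correct and follows essentially the same route as the paper: both rest on the identity $M^{2}=\mathcal{F}_{n}^{2}=P_{(\cdot(n-1))}$ and on tracking how eigenvalues of $M$ (respectively $\mathcal{F}_{n}$) map to eigenvalues of $P_{(\cdot(n-1))}$ under squaring. The only cosmetic difference is that you obtain the equalities directly from the diagonalization $M=U^{*}DU$, whereas the paper first deduces the inequalities $t_{1}+t_{-1}\leq s_{1}$ and $t_{i}+t_{-i}\leq s_{-1}$ from the eigenvector containments and then forces equality because both sets of multiplicities sum to $n$.
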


\begin{proof}
If $v$ is an eigenvector for $M$ with eigenvalue $i$ or $-i$, then by Corollary \ref{C:spectral}, we have
\[ P_{(\cdot (n-1))} v = M^2 v = -1 v \]
so $v$ is an eigenvector for $P_{(\cdot (n-1))}$ with eigenvalue $-1$.  Similarly, if $v$ is a eigenvector for $M$ with eigenvalue $1$ or $-1$, then $v$ is an eigenvector for $P_{(\cdot (n-1))}$.  Therefore, we have that $t_{1} + t_{-1} \leq s_{1}$ and $t_{i} + t_{-i} \leq s_{-1}$.  However, all multiplicities must sum to $n$, so the inequalities must be equalities.

The same argument applies to the eigenvectors for $\mathcal{F}_{n}$.
\end{proof}

\begin{theorem} \label{T:nxnspectrum}
Let $n \in \mathbb{N}$ and $1 \neq m \in \mathbb{Z}_{n}$ be a unit.  Let $M = \mathcal{F}_{n} P_{(\cdot m)}$, and let $t_{\lambda}$ denote the multipliticity of the eigenvalue $\lambda$ for $M$.  The multiplicities are given by the following table:
\begin{center}
	\begin{tabular}{|| l | l || l | r || r | r | r | r ||}
	\hline
	$n$ & $m$ & $(\frac{n}{m}),(\frac{m}{n})$ & $tr(M)$ & $t_1$ & $t_{-1}$ & $t_i$ & $t_{-i}$ \\ \hline
	$4k$ & $4l +1$ & $(\frac{n}{m})=1$ & $1+i$ & k+1 & k & k & k-1  \\ \hline
	$4k$ & $4l + 1$ & $(\frac{n}{m})=-1$ & $-1-i $ & k & k+1 & k  & k-1 \\ \hline
	$4k$ & $4l + 3$ & $(\frac{n}{m})=1$ & $1-i $ & k+1 & k & k-1 & k  \\ \hline
	$4k$ & $4l + 3$ & $(\frac{n}{m})=-1$ & $-1+i $ & k & k+1 & k-1  & k \\ \hline
	$4k+1$ & $ $ & $(\frac{m}{n})=1 $ & $1 $ & k+1 & k & k & k \\ \hline
	$4k+1$ & $ $ & $(\frac{m}{n})=-1 $ & $-1 $ & k & k+1  & k & k \\ \hline
	$4k+2$ & $ $ & $ $ & $0 $ & k+1 & k+1 & k & k  \\ \hline		
	$4k+3$ & $ $ & $(\frac{m}{n})=1$ & $i $ & k+1 & k+1 & k+1 & k  \\ \hline		
	$4k+3$ & $ $ & $(\frac{m}{n})=-1$ & $-i $ & k+1 & k+1 & k & k+1  \\ \hline		
	\end{tabular}
\end{center} 

\end{theorem}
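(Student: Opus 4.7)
The plan is to combine three pieces of information: the trace of $M$ (via the Gauss sum formula), the fact that $\sigma(M) \subseteq \{1,-1,i,-i\}$, and Lemma \ref{L:mult}. Since $M^4 = I$, the trace splits cleanly as $tr(M) = (t_1 - t_{-1}) + i(t_i - t_{-i})$, which yields two linear equations in the four unknown multiplicities once the real and imaginary parts of $tr(M)$ are known. Lemma \ref{L:mult} then supplies the two remaining equations $t_1 + t_{-1} = f_1 + f_{-1}$ and $t_i + t_{-i} = f_i + f_{-i}$, reducing the problem to a $2 \times 2$ linear system in each row of the table.

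First, I would compute $tr(M)$ in each of the nine cases by applying Lemma \ref{L:trace} and Theorem \ref{book}. When $n = 4k$, the unit $m$ is automatically odd, and the factor $(1 + i^m)$ equals $1+i$ or $1-i$ according to whether $m \equiv 1$ or $3 \pmod 4$; combined with the sign $(\tfrac{n}{m}) = \pm 1$ this produces the four values $\pm 1 \pm i$ listed in the table. When $n = 4k+1$ or $n = 4k+3$, the Jacobi-symbol factor $(\tfrac{m}{n})$ distinguishes the two subcases, yielding traces $\pm 1$ or $\pm i$ respectively. The case $n \equiv 2 \pmod 4$ gives $tr(M) = 0$ immediately.

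Next, I would invoke the McClelland--Parks multiplicities of $\mathcal{F}_n$ from \cite{McPa72}: namely $(f_1, f_{-1}, f_i, f_{-i})$ equals $(k{+}1, k, k{-}1, k)$, $(k{+}1, k, k, k)$, $(k{+}1, k{+}1, k, k)$, or $(k{+}1, k{+}1, k, k{+}1)$ according to $n \bmod 4$. By Lemma \ref{L:mult} these fix the sums $t_1 + t_{-1}$ and $t_i + t_{-i}$. Pairing each such sum with the corresponding difference read off from $\mathrm{Re}\, tr(M)$ or $\mathrm{Im}\, tr(M)$ gives $t_\lambda$ uniquely in each case. For instance, in the row $n = 4k$, $m = 4l+1$, $(\tfrac{n}{m}) = 1$: we have $t_1 + t_{-1} = 2k+1$, $t_i + t_{-i} = 2k-1$, $t_1 - t_{-1} = 1$, $t_i - t_{-i} = 1$, so $(t_1,t_{-1},t_i,t_{-i}) = (k{+}1, k, k, k{-}1)$, matching the table; the other rows are analogous.

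The main obstacle is purely bookkeeping: making sure that for $n \equiv 0 \pmod 4$ the two binary choices (residue of $m$ modulo $4$ and value of the Jacobi symbol) combine correctly in the Gauss-sum formula, and that the McClelland--Parks multiplicities are cited in the form that matches the four residue classes of $n$ modulo $4$. No genuinely new ideas are required beyond the Gauss-sum evaluation of $tr(M)$ and the eigenspace inclusion coming from $M^2 = P_{(\cdot(n-1))}$; once those are in hand, each row of the table is a two-variable linear solve.
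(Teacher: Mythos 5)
Your proposal is correct and follows essentially the same route as the paper's proof: evaluate $tr(M)$ via Lemma \ref{L:trace} and the Gauss-sum formulas, equate real and imaginary parts to get $t_1 - t_{-1}$ and $t_i - t_{-i}$, and combine with Lemma \ref{L:mult} and the McClelland--Parks multiplicities of $\mathcal{F}_n$ to solve the resulting $2\times 2$ systems. One minor bookkeeping note: with the paper's sign convention for $\mathcal{F}_n$ your quoted tuples appear to swap $f_i$ and $f_{-i}$ in the $n \equiv 0$ and $n \equiv 3 \pmod 4$ cases, but this is harmless since Lemma \ref{L:mult} only uses the sums $f_1 + f_{-1}$ and $f_i + f_{-i}$.
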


\begin{proof}
The proof proceeds on a case by case basis.  We present the details of the case $n = 4k$; all other cases are similar.  Note that since $m$ is a unit in $\mathbb{Z}_{n}$, $n$ and $m$ are co-prime, and hence both $(\frac{n}{m})$ and $(\frac{m}{n})$ are $\pm 1$.  We use the fact that we know the multiplicities for $\mathcal{F}_{n}$; as above, let $f_{\lambda}$ denote the multiplicity of the eigenvalue $\lambda$ for $\mathcal{F}_{n}$.  The multiplicities for $n = 4k$ are: $f_{1} = k+1$, $f_{-1} = k$, $f_{i} = k$, and $f_{-i} = k-1$, as demonstrated in \cite{McPa72}.

Combining Lemma \ref{L:trace} with Theorem 1.5.4 in \cite{BEW98}, we obtain
\begin{align*}
t_{1} - t_{-1} + i t_{i} - i t_{-i} &= tr(\mathcal{F}_{n} P_{(\cdot m)}) \\
&= \sqrt{n} g(m;n) \\
&= (\frac{n}{m})( 1 + i^{m}).
\end{align*}
 We equate the real and imaginary parts, then combine with Lemma \ref{L:mult} to obtain the following two systems of equations:
\begin{align*}
t_{1} + t_{-1} &= 2k + 1 & t_{i} + t_{-i} &= 2k -1 \\
t_{1} - t_{-1} &= (\frac{n}{m}) &  i t_{i} - i t_{-i} &= (\frac{n}{m}) i^{m}.
\end{align*}
Solving these systems of equations yields the result.
\end{proof}

\begin{corollary}
Suppose $M$ and $N$ are both $n \times n$ Hadamard matrices in dephased form, and are symmetric permutations of the Fourier matrix.  If $tr(M) = tr(N)$, then $M$ and $N$ are spectrally equivalent.
\end{corollary}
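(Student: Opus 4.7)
The plan is to reduce everything to the classification already obtained. By Corollary \ref{C:fnperm}, since $M$ and $N$ are symmetric dephased Hadamard matrices permutationally equivalent to $\mathcal{F}_n$, there exist units $m,m' \in \mathbb{Z}_n$ such that $M$ is spectrally equivalent to $\mathcal{F}_n P_{(\cdot m)}$ and $N$ is spectrally equivalent to $\mathcal{F}_n P_{(\cdot m')}$. Since spectrally equivalent matrices share the same trace, the hypothesis $tr(M) = tr(N)$ translates into
\[ tr(\mathcal{F}_n P_{(\cdot m)}) = tr(\mathcal{F}_n P_{(\cdot m')}). \]
It now suffices to show that $\mathcal{F}_n P_{(\cdot m)}$ and $\mathcal{F}_n P_{(\cdot m')}$ are spectrally equivalent, i.e. that the eigenvalue multiplicities $t_1, t_{-1}, t_i, t_{-i}$ are the same for both.

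The second step is to invoke Theorem \ref{T:nxnspectrum} and read off the table. By Corollary \ref{C:spectral}, the spectrum is contained in $\{1,-1,i,-i\}$, so spectral equivalence amounts to equality of the four multiplicities. For a fixed residue class of $n$ modulo $4$, I would simply verify that the trace column takes distinct values across the rows of the table, so that the trace alone uniquely determines the row and hence the multiplicities $(t_1,t_{-1},t_i,t_{-i})$. Concretely: when $n \equiv 0 \pmod 4$ the four possible traces are $1+i,\, -1-i,\, 1-i,\, -1+i$, which are distinct; when $n \equiv 1 \pmod 4$ the traces are $\pm 1$; when $n \equiv 2 \pmod 4$ the trace is forced to be $0$; and when $n \equiv 3 \pmod 4$ the traces are $\pm i$. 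Thus in every residue class, the trace pins down the row.

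Once the trace has been shown to determine the row, it determines the multiplicities, so $\mathcal{F}_n P_{(\cdot m)}$ and $\mathcal{F}_n P_{(\cdot m')}$ have identical eigenvalues with identical multiplicities, hence are spectrally equivalent. Composing this with the spectral equivalences $M \sim \mathcal{F}_n P_{(\cdot m)}$ and $N \sim \mathcal{F}_n P_{(\cdot m')}$ from Corollary \ref{C:fnperm} gives the desired conclusion that $M$ and $N$ are spectrally equivalent. The only real content of the argument is the distinctness check in the table, which is immediate; the main conceptual step was done in establishing Theorem \ref{T:nxnspectrum}, so there is no substantial obstacle remaining.
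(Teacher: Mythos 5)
Your proposal is correct and follows essentially the same route as the paper: reduce via Corollary \ref{C:fnperm} to the matrices $\mathcal{F}_n P_{(\cdot m)}$ and $\mathcal{F}_n P_{(\cdot m')}$, then observe that within each residue class of $n$ modulo $4$ the trace values in the table of Theorem \ref{T:nxnspectrum} are pairwise distinct, so equal traces force equal multiplicities. The only cosmetic difference is that the paper phrases the table lookup in terms of $m \equiv m' \bmod 4$ and equality of the Jacobi symbols, while you phrase it as distinctness of the trace column; these are the same observation.
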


\begin{proof}
By Theorem \ref{T:fnperm}, there exists permutations $\tau_{1}$ and $\tau_{2}$, and units $m, m' \in \mathbb{Z}_{n}$ such that
\[ M = P_{\tau_{1}}^{T} \mathcal{F}_{n} P_{(\cdot m)} P_{\tau_{1}}^{T} \quad \text{ and } N = P_{\tau_{2}}^{T} \mathcal{F}_{n} P_{(\cdot m')} P_{\tau_{2}}^{T}. \]
Thus, $M$ is spectrally equivalent to $\mathcal{F}_{n} P_{(\cdot m)}$ and $N$ is spectrally equivalent to $\mathcal{F}_{n} P_{(\cdot m')}$.  Moreover, $tr(\mathcal{F}_{n} P_{(\cdot m)}) = tr(\mathcal{F}_{n} P_{(\cdot m')})$. Observing in the table in Theorem \ref{T:nxnspectrum} that the two traces being equal means that $m \equiv m' \mod 4$ and either $(\frac{n}{m}) = (\frac{n}{m'})$ or $(\frac{m}{n}) = (\frac{m'}{n})$, we obtain that the multiplicities of $\mathcal{F}_{n} P_{(\cdot m)}$ and $\mathcal{F}_{n} P_{(\cdot m')}$ are equal, and hence are spectrally equivalent.
\end{proof}

\section{Examples}
\subsection{Example 1} 
In Theorem \ref{T:nxnspectrum}, for the case $n=4k$ it appears that there are four possibilities for the multiplicities of the eigenvalues of a Hadamard matrix which is a symmetric permutation of the Fourier matrix. Consider the case $n=12$. Let $\F_{12}$ denote the $12 \times 12$ Fourier matrix, and $P_{(\cdot m)}$ denote the multiplicative permutation taking row $k$ to row $km$. Then the following table lists the multiplicities $m_x$ of the eigenvalues $x$ of $P_{(\cdot m)}^T \F_{12}$ for various $m$ mutually prime with $12$:
\begin{center}
	\begin{tabular}{| l | l | l | l | l |}
	\hline
	Matrix & $m_1$ & $m_{-1}$ & $m_i$ & $m_{-i}$ \\ \hline
	$ \F_{12}$ & 4 & 3 & 3  & 2 \\ \hline
	$P_{(\cdot 5)}^T \F_{12} $ & 3 & 4  & 2 & 3 \\ \hline
	$P_{(\cdot 7)}^T \F_{12}$ & 3 & 4 & 3 & 2  \\ \hline		
	$P_{(\cdot 11)}^T \F_{12}$ & 4 & 3 & 2  & 3  \\ \hline		
	\end{tabular}
\end{center} 
The above table was filled in using Theorem \ref{T:nxnspectrum}, and demonstrates an example of a Fourier matrix whose symmetric permutations represent the full set of possibilities outlined in the theorem.

\subsection{Example 2}
Using Theorem \ref{T:nxnspectrum}, we can find the spectrum of the symmetric $5 \times 5$ dephased Hadamard matrices. Using Theorem \ref{T:5x5H} and its proof, we can find the spectrum of the rest of the $5 \times 5$ dephased Hadamard matrices. To do this, we use a computer algebra system to find the spectrum of one such matrix from each equivalence class. As outlined in the proof of \ref{T:5x5H}, there are two possibilities: four of a kind on the diagonal, or a singleton and two pairs on the diagonal. We first treat the four of a kind case. Since the trace determines the equivalence class, we need one non-symmetric $5 \times 5$ dephased Hadamard matrix with each of the four possible four of a kind type diagonals. Let $\omega = \exp \left( \frac{2 \pi i}{5} \right) $. For example, observing the five by five Fourier matrix
\[
\F_5 = \frac{1}{\sqrt{5}}
\begin{pmatrix*}
1 & 1 & 1 & 1 & 1 \\
1 & \omega  & \omega^2 & \omega^3 & \omega^4  \\
1 & \omega^2  & \omega^4  & \omega & \omega^3  \\
1 &  \omega^3 & \omega & \omega^4 & \omega^2  \\
1 &  \omega^4 & \omega^3 & \omega^2 & \omega 
\end{pmatrix*},
\]
we notice that switching the third and fourth rows yields a non-symmetric $5 \times 5$ dephased Hadamard matrix with four of a kind type diagonal. Namely, if $P_{(*)}$ is the identity matrix permuted by the (column) permutation $(*)$, then
\[
P_{(3 4)}^T \F_5 = \frac{1}{\sqrt{5}}
\begin{pmatrix*}
1 & 1 & 1 & 1 & 1 \\
1 & \omega  & \omega^2 & \omega^3 & \omega^4  \\
1 &  \omega^3 & \omega & \omega^4 & \omega^2  \\
1 & \omega^2  & \omega^4  & \omega & \omega^3  \\
1 &  \omega^4 & \omega^3 & \omega^2 & \omega 
\end{pmatrix*}.
\]
We list one matrix from each such class and their respective spectra (the first is the one shown above). Let $q=\exp \left( \pi i /5 \right)$. The classes are represented by
\[
P_{(2 3)}^T \F_5 = \frac{1}{\sqrt{5}}
\begin{pmatrix*}
1 & 1 & 1 & 1 & 1 \\
1 & \omega  & \omega^2 & \omega^3 & \omega^4  \\
1 &  \omega^3 & \omega & \omega^4 & \omega^2  \\
1 & \omega^2  & \omega^4  & \omega & \omega^3  \\
1 &  \omega^4 & \omega^3 & \omega^2 & \omega 
\end{pmatrix*}
\]
with spectrum
\[
\left\lbrace 1,1,-1, -\frac25 +\frac45 q -\frac15 q^2 +\frac35 q^3 \pm \frac15 \sqrt{35-25q+20q^2 -20q^3 +25q^4}  \right\rbrace,
\]
\[
P_{(1 4)}^T \F_5 = \frac{1}{\sqrt{5}}
\begin{pmatrix*}
1 & 1 & 1 & 1 & 1 \\
1 &  \omega^4 & \omega^3 & \omega^2 & \omega \\
1 & \omega^2  & \omega^4  & \omega & \omega^3  \\
1 &  \omega^3 & \omega & \omega^4 & \omega^2  \\
1 & \omega  & \omega^2 & \omega^3 & \omega^4   
\end{pmatrix*}
\]
with spectrum
\[
\left\lbrace  1,1,-1, \frac25 -\frac45 q +\frac15 q^2 -\frac35 q^3 \pm \frac15 \sqrt{35-25q+20q^2 -20q^3 +25q^4} \right\rbrace,
\]
\[
P_{(1 2)(3 4)}^T \F_5 = \frac{1}{\sqrt{5}}
\begin{pmatrix*}
1 & 1 & 1 & 1 & 1 \\
1 & \omega^2  & \omega^4  & \omega & \omega^3  \\
1 & \omega  & \omega^2 & \omega^3 & \omega^4  \\
1 &  \omega^4 & \omega^3 & \omega^2 & \omega  \\
1 &  \omega^3 & \omega & \omega^4 & \omega^2  
\end{pmatrix*}
\]
with spectrum
\[
\left\lbrace 1,-1,-1, \frac15 q^3 -\frac25 q +\frac35 q^2 +\frac15 \pm \frac15 \sqrt{-5q+5 q^4 +20 -10q^3 +10q^2}   \right\rbrace,
\]
and
\[
P_{(1 3)(2 4)}^T \F_5 = \frac{1}{\sqrt{5}}
\begin{pmatrix*}
1 & 1 & 1 & 1 & 1 \\
1 &  \omega^3 & \omega & \omega^4 & \omega^2  \\
1 &  \omega^4 & \omega^3 & \omega^2 & \omega \\
1 & \omega  & \omega^2 & \omega^3 & \omega^4  \\
1 & \omega^2  & \omega^4  & \omega & \omega^3  
\end{pmatrix*}
\]
with spectrum
\[
\left\lbrace 1,-1,-1,-\frac15 q^3 +\frac25 q -\frac35 q^2 -\frac15 \pm \frac15 \sqrt{-5q+5 q^4 +20 -10q^3 +10q^2} \right\rbrace.
\]
Next, we consider one pair, two singletons case. Let $A$ be such a matrix. From the proof of Theorem \ref{T:5x5H}, we have
\[ 
\sqrt{5} A = 
\begin{pmatrix*}[r]
1 & 1 & 1 & 1 & 1 \\
1 & a  & b & c & d \\
1 & c  & a & d & b \\
1 & d  & c & b & a  \\
1 & b  & d & a & c 
\end{pmatrix*}
\]
or its transpose, where $a,b,c,d$ are fifth roots of unity. The orthogonality condition of unitary matrices (applied to rows two and three) yields
\[
0 = 1 + a\bar{c} + b\bar{a} + c\bar{d} + d\bar{b}.
\]
We shall show that a necessary condition for $A$ to be unitary is that $a=\bar{d}$. Assume otherwise. Then $a=\bar{c}$ or $a=\bar{b}$. In the first case, $d=\bar{b}$, so
\[
0=1+1+b\bar{a}+c\bar{d}+1,
\]
which is impossible by the triangle inequality. In the second case, $d=\bar{c}$, which is also impossible by the triangle inequality. Therefore, $a=\bar{d}$. This proves that any Hadamard matrix in the one pair, two singletons class must have the property that the element appearing twice on the diagonal is the conjugate of the element not appearing on the diagonal. Therefore, there are only four possible traces for such a matrix: once the element not appearing on the diagonal is fixed, the trace is completely determined. We write one matrix from each such equivalence class and their respective spectra. Let $q=\exp \left( \pi i /5 \right)$. The classes are represented by
\[
P_{(3 4)}^T \F_5 = \frac{1}{\sqrt{5}}
\begin{pmatrix*}
1 & 1 & 1 & 1 & 1 \\
1 & \omega  & \omega^2 & \omega^3 & \omega^4  \\
1 & \omega^2  & \omega^4  & \omega & \omega^3  \\
1 &  \omega^4 & \omega^3 & \omega^2 & \omega \\
1 &  \omega^3 & \omega & \omega^4 & \omega^2  
\end{pmatrix*}
\]
with spectrum
\[
\left\lbrace 1,-1,q^2,\frac{1}{10}q^3 -\frac15 q -\frac15 q^2 +\frac{1}{10} \pm \frac{1}{10} \sqrt{-5q+5-90q^3} \right\rbrace,
\]
\[
P_{(1 3)}^T \F_5 = \frac{1}{\sqrt{5}}
\begin{pmatrix*}
1 & 1 & 1 & 1 & 1 \\
1 &  \omega^3 & \omega & \omega^4 & \omega^2  \\
1 & \omega^2  & \omega^4  & \omega & \omega^3  \\
1 & \omega  & \omega^2 & \omega^3 & \omega^4  \\
1 &  \omega^4 & \omega^3 & \omega^2 & \omega 
\end{pmatrix*}
\]
with spectrum
\[
\left\lbrace 1,-1,-q^3,\frac25 q^3 +\frac15 q-\frac{3}{10}q^2 -\frac{1}{10} \pm \frac{1}{10} \sqrt{-20 q +25q^4 +25 -20q^3 +110q^2}  \right\rbrace,
\]
\[
P_{(1 2 3)}^T \F_5 = \frac{1}{\sqrt{5}}
\begin{pmatrix*}
1 & 1 & 1 & 1 & 1 \\
1 &  \omega^3 & \omega & \omega^4 & \omega^2  \\
1 & \omega  & \omega^2 & \omega^3 & \omega^4  \\
1 & \omega^2  & \omega^4  & \omega & \omega^3  \\
1 &  \omega^4 & \omega^3 & \omega^2 & \omega 
\end{pmatrix*}
\]
with spectrum
\[
\left\lbrace 1,-1,q, \frac{3}{10}q^3 -\frac15 -\frac{1}{10}q -\frac{1}{10}q^2 \pm \frac{1}{10} \sqrt{95q+90q^3 -5q^4 -90 -95q^2}  \right\rbrace,
\]
\[
P_{(1 2 4)}^T \F_5 = \frac{1}{\sqrt{5}}
\begin{pmatrix*}
1 & 1 & 1 & 1 & 1 \\
1 &  \omega^4 & \omega^3 & \omega^2 & \omega \\
1 & \omega  & \omega^2 & \omega^3 & \omega^4  \\
1 &  \omega^3 & \omega & \omega^4 & \omega^2  \\
1 & \omega^2  & \omega^4  & \omega & \omega^3  
\end{pmatrix*}
\]
with spectrum
\[
\left\lbrace 1,-1,-q^4, \frac15 q^3 -\frac{3}{10} +\frac{1}{10} q -\frac{2}{5}q^2 \pm \frac{1}{10} \sqrt{-110q -20q^3 +20q^4 +25 +25q^2}  \right\rbrace.
\]
We present the spectrum of every dephased $5 \times 5$ Hadamard matrix in the table below: the symmetric ones are listed first, then the four of a kind asymmetric ones, then the pair, two singletons asymmetric ones. Let $\omega=\exp(2 \pi i /5)$. We organize the matrices by trace, but omit the factor of $\frac{1}{\sqrt5}$.
\begin{center}
	\begin{tabular}{| l | l |}
	\hline
	Trace ($\times \sqrt{5})$ & Spectrum  \\ \hline
	$ 1+ 2\omega +2\omega^4 $ & $\{ 1,1,-1,i,-i \}$\\ \hline
	$ 1+ 2\omega^2 +2\omega^3 $ &  $\{ 1,-1,-1,i,-i \}$ \\ \hline
	$ 1+ 4\omega  $ & $ \left\lbrace 1,1,-1, -\frac25 +\frac45 q -\frac15 q^2 +\frac35 q^3 \pm \frac15 \sqrt{35-25q+20q^2 -20q^3 +25q^4}  \right\rbrace $\\ \hline		
	$ 1+ 4\omega^4 $ &  $ \left\lbrace  1,1,-1, \frac25 -\frac45 q +\frac15 q^2 -\frac35 q^3 \pm \frac15 \sqrt{35-25q+20q^2 -20q^3 +25q^4} \right\rbrace $ \\ \hline	
	$ 1+ 4\omega^2 $ &  $ \left\lbrace 1,-1,-1, \frac15 q^3 -\frac25 q +\frac35 q^2 +\frac15 \pm \frac15 \sqrt{-5q+5 q^4 +20 -10q^3 +10q^2}   \right\rbrace $ \\ \hline
	$ 1+ 4\omega^3 $ &  $ \left\lbrace 1,-1,-1,-\frac15 q^3 +\frac25 q -\frac35 q^2 -\frac15 \pm \frac15 \sqrt{-5q+5 q^4 +20 -10q^3 +10q^2} \right\rbrace $ \\ \hline
	$ 1+ \omega + 2\omega^2 + \omega^4 $ &  $ \left\lbrace 1,-1,q^2,\frac{1}{10}q^3 -\frac15 q -\frac15 q^2 +\frac{1}{10} \pm \frac{1}{10} \sqrt{-5q+5-90q^3} \right\rbrace $ \\ \hline
	$ 1 + \omega + 2\omega^3  +\omega^4 $ &  $ \left\lbrace 1,-1,-q^3,\frac25 q^3 +\frac15 q-\frac{3}{10}q^2 -\frac{1}{10} \pm \frac{1}{10} \sqrt{-20 q +25q^4 +25 -20q^3 +110q^2}  \right\rbrace $ \\ \hline
	$ 1+ 2\omega + \omega^2  + \omega^3  $ &  $ \left\lbrace 1,-1,q, \frac{3}{10}q^3 -\frac15 -\frac{1}{10}q -\frac{1}{10}q^2 \pm \frac{1}{10} \sqrt{95q+90q^3 -5q^4 -90 -95q^2}  \right\rbrace $ \\ \hline
	$ 1+ \omega^2 + \omega^3 + 2 \omega^4 $ &  $ \left\lbrace 1,-1,-q^4, \frac15 q^3 -\frac{3}{10} +\frac{1}{10} q -\frac{2}{5}q^2 \pm \frac{1}{10} \sqrt{-110q -20q^3 +20q^4 +25 +25q^2}  \right\rbrace $ \\ \hline	
	\end{tabular}
\end{center}

The first two entries of the table correspond to matrices with two pair on the diagonal, and are hence symmetric.  The remaining entries in the table correspond to nonsymmetric matrices.  Notice that there are only two traces listed for symmetric $5\times5$ dephased Hadamard matrices. This follows from Theorem \ref{book} on Gauss sums: $tr(\F_5) = tr(\F_5 P_{(\cdot 4)} )$ and $tr(\F_5 P_{(\cdot 2)} ) = tr(\F_5 P_{(\cdot 3)} )$.

\begin{acknowledgements}
This work was partially supported by a grant from the Simons Foundation (\#228539 to Dorin Dutkay).
\end{acknowledgements}




\bibliographystyle{amsplain}
\bibliography{eframes}

\def\cprime{$'$}
\providecommand{\bysame}{\leavevmode\hbox to3em{\hrulefill}\thinspace}
\providecommand{\MR}{\relax\ifhmode\unskip\space\fi MR }
\providecommand{\MRhref}[2]{%
  \href{http://www.ams.org/mathscinet-getitem?mr=#1}{#2}
}
\providecommand{\href}[2]{#2}
\begin{thebibliography}{10}

\bibitem{AuTo79}
L.~Auslander and R.~Tolimieri, \emph{Is computing with the finite {F}ourier
  transform pure or applied mathematics?}, Bull. Amer. Math. Soc. (N.S.)
  \textbf{1} (1979), no.~6, 847--897. \MR{546312 (81e:42020)}

\bibitem{Ban12}
Teodor. Banica, \emph{Quantum permutations, hadamard matrices, and the search
  for matrix models}, Banach Center Publ. \textbf{98} (2012), 11--42.

\bibitem{BEW98}
Bruce~C. Berndt, Ronald~J. Evans, and Kenneth~S. Williams, \emph{Gauss and
  {J}acobi sums}, Canadian Mathematical Society Series of Monographs and
  Advanced Texts, John Wiley \& Sons, Inc., New York, 1998, A
  Wiley-Interscience Publication. \MR{1625181 (99d:11092)}

\bibitem{DH12}
Dorin~Ervin Dutkay and John Haussermann, \emph{Tiling properties of spectra of
  measures},  (2013).

\bibitem{Fug74}
Bent. Fuglede, \emph{Commuting self-adjoint partial differential operators and
  a group theoretic problem}, J. Functional Analysis \textbf{16} (1974),
  101--121.

\bibitem{Haag97}
U.~Haagerup, \emph{Orthogonal maximal abelian {$*$}-subalgebras of the
  {$n\times n$} matrices and cyclic {$n$}-roots}, Operator algebras and quantum
  field theory ({R}ome, 1996), Int. Press, Cambridge, MA, 1997, pp.~296--322.
  \MR{1491124 (98k:46087)}

\bibitem{Haag12}
\bysame, \emph{Cyclic p-roots of prime length p and related complex {H}adamard
  matrices},  (2012), arXiv/0803.2629.

\bibitem{JP98}
Palle E.~T. Jorgensen and Steen Pedersen, \emph{Dense analytic subspaces in
  fractal {$L\sp 2$}-spaces}, J. Anal. Math. \textbf{75} (1998), 185--228.
  \MR{MR1655831 (2000a:46045)}

\bibitem{McPa72}
James~H. McClellan and Thomas~W. Parks, \emph{Eigenvalue and eigenvector
  decomposition of the discrete {F}ourier transform}, IEEE Trans. Audio
  Electroacoust. \textbf{AU-20} (1972), no.~1, 66--74. \MR{0399751 (53 \#3593)}

\bibitem{TaZy06}
Wojciech Tadej and Karol {\.Z}yczkowski, \emph{A concise guide to complex
  {H}adamard matrices}, Open Syst. Inf. Dyn. \textbf{13} (2006), no.~2,
  133--177. \MR{2244963 (2007f:15020)}

\end{thebibliography}
\end{document}